\documentclass[12pt]{article}

\usepackage[margin=1in]{geometry}
\usepackage{amsmath,amsthm,amssymb,amsfonts}
\usepackage{mathtools}
\usepackage{hyperref}
\usepackage{url}
\usepackage[numbers,sort&compress]{natbib}
\usepackage{enumitem}
\usepackage{silence}
\usepackage[expansion=false]{microtype}
\usepackage{tikz}
\usetikzlibrary{arrows.meta,positioning}

\theoremstyle{plain}
\newtheorem{theorem}{Theorem}[section]
\newtheorem{proposition}[theorem]{Proposition}
\newtheorem{lemma}[theorem]{Lemma}
\newtheorem{corollary}[theorem]{Corollary}

\theoremstyle{definition}
\newtheorem{definition}[theorem]{Definition}
\newtheorem{example}[theorem]{Example}

\theoremstyle{remark}
\newtheorem{remark}[theorem]{Remark}

\newcommand{\R}{\mathbb{R}}
\newcommand{\C}{\mathbb{C}}
\newcommand{\tr}{\operatorname{tr}}
\newcommand{\adj}{\operatorname{adj}}
\newcommand{\doi}[1]{\textsc{doi:}~\texttt{#1}}
\DeclareMathOperator{\dt}{det}
\DeclareMathOperator{\rank}{rank}
\DeclareMathOperator{\Res}{Res}
\DeclareMathOperator{\spec}{spec}
\newcommand{\ip}[2]{\langle #1,\, #2\rangle}
\newcommand{\Ec}{E^{c}}
\newcommand{\Etr}{E^{\mathrm{tr}}}
\newcommand{\Jtr}{J_{\mathrm{tr}}}
\newcommand{\Ltr}{L_{\mathrm{tr}}}
\newcommand{\chitr}{\chi_{\mathrm{tr}}}

\title{The Bogdanov--Takens normal-form coefficients in $\R^{n}$
	as directional derivatives of the characteristic invariants}

\author{%
	E.~Chan-L\'opez\thanks{%
		E-mail: \texttt{eduardo.clopez13@gmail.com}.\;
		\textsc{orcid}:~0009-0003-7712-7907.}}

\date{%
	Divisi\'on Acad\'emica de Ciencias B\'asicas,
	Universidad Ju\'arez Aut\'onoma de Tabasco, 86690 Cunduac\'an,
	Tabasco, Mexico}

\begin{document}
	\maketitle
	
	\begin{abstract}
		Let $X$ be a vector field on an open set of $\R^{n}$ with $X(p)=0$ and
		Jacobian $J=DX(p)$ of rank $n-1$ having $0$ as an eigenvalue of
		algebraic multiplicity two.  Let $q_{0}$ span $\ker J$ and let
		$e_{k}(A)$ denote the sum of the principal $k\times k$ minors of $A$,
		so that $\dt(\lambda I-A)=\sum_{k}(-1)^{k}e_{k}(A)\lambda^{n-k}$.
		Under the usual hyperbolicity assumption on the transverse block, we
		prove that the two quadratic coefficients $a,b$ of the
		Bogdanov--Takens (BT) normal form on the centre manifold are
		\[
		a=-\frac{1}{2}\,\frac{D_{q_{0}}e_{n}}{e_{n-2}},
		\qquad
		b=\frac{D_{q_{0}}e_{n-1}}{e_{n-2}}
		-\frac{e_{n-3}\,D_{q_{0}}e_{n}}{e_{n-2}^{2}},
		\]
		where $D_{q_{0}}e_{k}$ is the derivative of $x\mapsto e_{k}(DX(x))$ at
		$p$ along $q_{0}$.  The underlying spectral identity itself requires
		only invertibility of the transverse block and therefore remains valid
		without hyperbolicity.  Both formulas are the coefficients of
		$\lambda^{0}$ and
		$\lambda^{1}$ of one generating identity for the first-order spectral
		jet of $DX$ along $\ker J$; every higher coefficient is contaminated
		by the transverse block, and we prove that this dichotomy is exact.  The planar identities
		$a=-\tfrac12 D_{q_{0}}\dt$, $b=D_{q_{0}}\tr$ are the case $n=2$.  The
		proof uses only the $2$-jet of $X$ at $p$: no centre manifold, no
		normal-form transformation, no generalized eigenvector of the adjoint
		and no analytic perturbation theory occur in it.  We give the
		resulting coordinate-free nondegeneracy test and a geometric reading:
		the two identities are $a=-\tfrac12 D_{q_{0}}\delta$ and
		$b=D_{q_{0}}\tau$ for a central determinant $\delta$ and a central
		trace $\tau$ that are explicit rational functions of the principal
		minors of $DX$, so that BT nondegeneracy becomes transversality of the
		kernel line to the two hypersurfaces $\{\delta=0\}$ and
		$\{\tau=0\}$, and the topological type of the versal BT unfolding
		becomes the sign of an explicit polynomial in the entries of $DX(p)$
		and $D^{2}X(p)$, with no eigenvector computation.  A self-contained
		\textsc{Wolfram Language} script verifying every statement of the
		paper symbolically accompanies it.
	\end{abstract}
	
	\noindent\textbf{Keywords:} Bogdanov--Takens bifurcation, normal-form
	coefficients, nilpotent singularity, characteristic polynomial,
	elementary symmetric invariants, adjugate, centre manifold.
	
	\medskip
	\noindent\textbf{2020 Mathematics Subject Classification:}
	34C23, 37G10, 37G05, 15A18, 65P30.
	
	\section{Introduction}\label{sec:intro}
	
	The Bogdanov--Takens bifurcation \cite{Bogdanov1975,Takens1974} is the
	generic codimension-two bifurcation of an equilibrium whose linearization
	has a nilpotent $2\times2$ block.  On the centre manifold the germ is
	smoothly equivalent to
	\begin{equation}\label{eq:BT}
		\dot u_{1}=u_{2},\qquad
		\dot u_{2}=a\,u_{1}^{2}+b\,u_{1}u_{2}+\mathcal O(|u|^{3}),
	\end{equation}
	and the bifurcation is nondegenerate when $ab\neq0$.  The standard route
	to $a$ and $b$ in $\R^{n}$ \cite{Kuznetsov2004,Kuznetsov2005} requires a
	Jordan chain $q_{0},q_{1}$ of $J=DX(p)$, the dual chain $p_{1},p_{0}$ of
	$J^{\top}$, and the second-order multilinear form $B=D^{2}X(p)$:
	\begin{equation}\label{eq:abKuz}
		a=\tfrac12\ip{p_{1}}{B(q_{0},q_{0})},
		\qquad
		b=\ip{p_{0}}{B(q_{0},q_{0})}+\ip{p_{1}}{B(q_{0},q_{1})}.
	\end{equation}
	In a companion paper \cite{Companion} we showed that in the plane these
	two numbers are the directional derivatives, along $\ker J$, of the two
	coefficients of the characteristic polynomial,
	\begin{equation}\label{eq:planar}
		a=-\tfrac12\,D_{q_{0}}\!\dt DX(p),
		\qquad
		b=D_{q_{0}}\!\tr DX(p),
	\end{equation}
	so that neither $q_{1}$, nor $p_{0}$, nor $p_{1}$, nor $B$ is needed.
	The natural question is whether \eqref{eq:planar} is an accident of
	dimension two, where the characteristic polynomial has exactly two
	coefficients and the linearization has no transverse directions.
	
	It is not.  In $\R^{n}$ the two \emph{lowest-order} coefficients of the
	characteristic polynomial play the role that the determinant and the
	trace play in the plane, up to a single normalizer, the determinant of
	the transverse block:
	\begin{equation}\label{eq:main}
		a=-\frac{1}{2}\,\frac{D_{q_{0}}e_{n}}{e_{n-2}},
		\qquad
		b=\frac{D_{q_{0}}e_{n-1}}{e_{n-2}}
		-\frac{e_{n-3}\,D_{q_{0}}e_{n}}{e_{n-2}^{2}}.
	\end{equation}
	For $n=2$ one has $e_{0}=1$ and $e_{-1}=0$, and \eqref{eq:main} collapses
	to \eqref{eq:planar}.
	
	Three features of \eqref{eq:main} deserve to be stated at the outset.
	
	\emph{(i) The proof is algebraic, not perturbative.}  One may derive
	\eqref{eq:main} by splitting the spectrum into a central and a transverse
	part, factoring $\chi_{DX(x)}$ accordingly and reducing to a centre
	manifold.  That route needs spectral separation, regularity of the
	spectral projectors, and the centre-manifold theorem, which is a large
	amount of machinery for a statement about the $2$-jet of $X$.  We prove
	\eqref{eq:main} instead from Jacobi's formula for the derivative of a
	determinant together with the block structure of the adjugate of a
	nilpotent-plus-invertible matrix.  What comes out is stronger
	(Theorem~\ref{thm:gen}): the whole first-order spectral jet of $DX$ along
	$\ker J$ is generated by one polynomial identity in $\lambda$, of which
	\eqref{eq:main} is the pair of lowest coefficients, and the same identity
	shows that no other coefficient is transversally clean
	(Proposition~\ref{prop:sharp} and Theorem~\ref{thm:sharp}); the latter
	rests on an elementary gap lemma for polynomials, Lemma~\ref{lem:gap},
	which we prove for want of a reference and which may be of independent
	interest.
	
	\emph{(ii) Hyperbolicity is used only to name the answer.}  The algebraic
	identity requires only $\dt \Jtr\neq0$; it remains valid when the
	transverse spectrum contains a purely imaginary pair or a resonance
	$\mu_{i}+\mu_{j}=0$, where the perturbative route breaks down.
	Hyperbolicity of $\Jtr$ enters exactly once, in
	Lemma~\ref{lem:identify}, to identify \eqref{eq:abKuz} with the
	coefficients of \eqref{eq:BT}.  We keep the two statements strictly
	apart.
	
	\emph{(iii) The identities have a geometric content.}  Setting
	$\delta:=e_{n}/e_{n-2}$ and $\tau:=(e_{n-1}e_{n-2}-e_{n}e_{n-3})/e_{n-2}^{2}$,
	both rational functions of the principal minors of $DX$, we show in
	Section~\ref{sec:geom} that $\delta$ and $\tau$ vanish at $p$ and that
	$a=-\tfrac12 D_{q_{0}}\delta$, $b=D_{q_{0}}\tau$.  Thus \eqref{eq:main}
	is literally \eqref{eq:planar} with the determinant and the trace
	replaced by a \emph{central} determinant and a \emph{central} trace, and
	the nondegeneracy conditions become transversality of the kernel line to
	two hypersurfaces of phase space.  This yields, in
	Corollary~\ref{cor:sign}, the topological type of the versal unfolding as
	the sign of an explicit polynomial in the entries of $DX(p)$ and
	$D^{2}X(p)$.
	
	Sections~\ref{sec:setting}--\ref{sec:main} contain the setting, the
	identification lemma and the two theorems;
	Section~\ref{sec:cons} the consequences;
	Section~\ref{sec:geom} the geometric reading;
	Section~\ref{sec:hopf} what does and does not transfer to a Hopf point;
	Section~\ref{sec:algo} the algorithm and its implementation; and
	Section~\ref{sec:examples} the examples.
	
	\section{Setting}\label{sec:setting}
	
	\subsection{Invariants and conventions}
	
	For $A\in\R^{m\times m}$ write
	\begin{equation}\label{eq:chi}
		\chi_{A}(\lambda):=\dt(\lambda I-A)
		=\sum_{k=0}^{m}(-1)^{k}e_{k}(A)\,\lambda^{m-k},
	\end{equation}
	so that $e_{k}(A)$ is the sum of the principal $k\times k$ minors of
	$A$, $e_{0}(A)=1$, $e_{1}(A)=\tr A$, $e_{m}(A)=\dt A$; equivalently
	$e_{k}(A)$ is the $k$-th elementary symmetric function of the
	eigenvalues.  We set $e_{k}:=0$ for $k<0$.
	
	For the empty matrix $A\in\R^{0\times0}$ we use throughout the standard
	conventions
	\begin{equation}\label{eq:empty}
		\chi_{A}(\lambda)=1,\qquad \dt A=1,\qquad \adj A=1,\qquad \tr A=0,
	\end{equation}
	which are the ones making \eqref{eq:chi} and the block formulas of
	Section~\ref{sec:gen} valid without case distinction.  These conventions
	are used only when $n=2$.
	
	\subsection{The standing hypothesis}
	
	\begin{definition}[Standing hypothesis]\label{def:BT0}
		Let $U\subset\R^{n}$ be open, $n\ge2$, $X\in C^{2}(U,\R^{n})$, $p\in U$
		with $X(p)=0$, and $J:=DX(p)$.  We say that $(X,p)$ satisfies
		$(\mathrm{BT}_{0})$ when
		\begin{enumerate}[label=(\roman*)]
			\item $\rank J=n-1$;
			\item $e_{n-1}(J)=0$;
			\item $e_{n-2}(J)\neq0$.
		\end{enumerate}
	\end{definition}
	
	\begin{lemma}[Equivalence, minimality, adapted splitting]\label{lem:split}
		$(X,p)$ satisfies $(\mathrm{BT}_{0})$ if and only if $0$ is an
		eigenvalue of $J$ of algebraic multiplicity two and geometric
		multiplicity one and the remaining $n-2$ eigenvalues are nonzero.  In
		that case $e_{n}(J)=0$ automatically, and
		\begin{equation}\label{eq:splitting}
			\R^{n}=\Ec\oplus\Etr,\qquad \Ec:=\ker J^{2},\quad
			\Etr:=\operatorname{im}J^{2},
		\end{equation}
		with both summands $J$-invariant, $\dim \Ec=2$, $J|_{\Ec}$ nilpotent of
		rank one, and $\Jtr:=J|_{\Etr}$ invertible.  Moreover
		\begin{equation}\label{eq:enk}
			\chi_{J}(\lambda)=\lambda^{2}\chitr(\lambda),
			\qquad
			e_{n-2}(J)=\dt\Jtr,
			\qquad
			e_{n-3}(J)=e_{n-3}(\Jtr),
		\end{equation}
		where $\chitr:=\chi_{\Jtr}$.  None of (i)--(iii) may be omitted.
	\end{lemma}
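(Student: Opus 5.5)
The plan is to read everything off the characteristic polynomial written through the eigenvalues. Let $\mu_{1},\dots,\mu_{n}$ be the eigenvalues of $J$ with multiplicity, so that $e_{k}(J)$ is the $k$-th elementary symmetric function of the $\mu_{i}$.

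\emph{Forward direction.} Assume (i)--(iii).
\begin{itemize}
\item Condition (i) forces $\dt J=0$, so $e_{n}(J)=0$ and $0$ is an eigenvalue.
\item Its geometric multiplicity is $n-\rank J=1$.
\item Because $e_{n}=0$, the coefficient of $\lambda$ in $\chi_{J}$ is $\pm e_{n-1}(J)$, which vanishes by (ii). Hence $\lambda^{2}\mid\chi_{J}$ and the algebraic multiplicity is at least two.
\item Write $\chi_{J}=\lambda^{2}g(\lambda)$. Then $g(0)=\pm e_{n-2}(J)\neq0$ by (iii), so the multiplicity is exactly two and the other $n-2$ roots are nonzero.
\end{itemize}

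\emph{Converse.} If $0$ has algebraic multiplicity two and geometric multiplicity one, then $\rank J=n-1$. The factorization $\chi_{J}=\lambda^{2}g$ with $g(0)\neq0$ gives $e_{n}(J)=e_{n-1}(J)=0$ and $e_{n-2}(J)=\pm g(0)\neq0$ after comparing coefficients. The sign needs no tracking, since only nonvanishing matters. This also shows that $e_{n}(J)=0$ automatically.

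\emph{Splitting.} Apply Fitting's decomposition to $J$: $\R^{n}=\ker J^{m}\oplus\operatorname{im}J^{m}$ for $m$ large, where $\ker J^{m}$ is the generalized $0$-eigenspace. Its dimension is the algebraic multiplicity, namely two. A nilpotent operator on a $2$-dimensional space satisfies $N^{2}=0$, so $\ker J^{m}=\ker J^{2}$. Since $\dim\ker J^{2}=2$, rank-nullity gives $\dim\operatorname{im}J^{2}=n-2$. Moreover $\operatorname{im}J^{m}=\operatorname{im}J^{2}$, because $J$ restricted to $\operatorname{im}J^{2}$ is injective: its kernel lies in $\ker J\cap\operatorname{im}J^{2}\subset\ker J^{2}\cap\operatorname{im}J^{2}=0$.

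Both summands are clearly $J$-invariant. The restriction $J|_{\Ec}$ is nilpotent, and its rank is $\dim\Ec-\dim\ker J=1$, since $\ker J\subset\Ec$. The restriction $\Jtr$ is injective, hence invertible.

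\emph{The identities \eqref{eq:enk}.} In a basis adapted to the splitting, $J$ is block diagonal $\operatorname{diag}(N,\Jtr)$, so $\chi_{J}=\chi_{N}\chi_{\Jtr}=\lambda^{2}\chitr$. The eigenvalues of $J$ are therefore $0,0$ together with those of $\Jtr$, and elementary symmetric functions ignore zero entries. This gives $e_{k}(J)=e_{k}(\Jtr)$ for all $k$, and in particular:
\begin{itemize}
\item $e_{n-2}(J)=e_{n-2}(\Jtr)=\dt\Jtr$;
\item $e_{n-3}(J)=e_{n-3}(\Jtr)$.
\end{itemize}
For $n=2$ these hold with the empty-matrix conventions \eqref{eq:empty}.

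\emph{Minimality.} This is the step where care is needed: for each condition I need an example satisfying the other two but not the conclusion. I would take $n=2$ (padding with an identity block for larger $n$).
\begin{itemize}
\item Drop (i): $J=0$ satisfies $e_{1}=\tr J=0$ and $e_{0}=1\neq0$, but has rank $0$ and geometric multiplicity two.
\item Drop (ii): $J=\operatorname{diag}(0,1)$ has rank $1$ and $e_{0}\neq0$, but $e_{1}=1\neq0$ and $0$ is a simple eigenvalue.
\item Drop (iii): take $n=3$ and $J$ the nilpotent $3\times3$ Jordan block. It has rank $2=n-1$ and $e_{2}=0$, but $e_{1}=0$, and $0$ has multiplicity three.
\end{itemize}
When padding with identity blocks, I need to check that the padded versions still satisfy the other two conditions. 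Diagonal padding by $I$ multiplies $\chi$ by $(\lambda-1)^{r}$, which preserves the relevant vanishing and nonvanishing of the low coefficients, so each example generalizes to every $n$.
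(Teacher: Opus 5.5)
Your proof is correct and follows essentially the paper's route: you read (i)--(iii) off the coefficients of $\lambda^{0},\lambda^{1},\lambda^{2}$ in $\chi_{J}$, use Fitting's decomposition with $\ker J^{m}=\ker J^{2}$ and the block factorization $\chi_{J}=\lambda^{2}\chitr$, and give block-diagonal counterexamples for minimality. The one assertion you leave unjustified, $\ker J^{2}\cap\operatorname{im}J^{2}=0$, follows in one line from $\ker J^{4}=\ker J^{2}$; alternatively you can bypass it, since $\operatorname{im}J^{m}\subseteq\operatorname{im}J^{2}$ and both have dimension $n-2$.

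Your example for dropping (ii), $\operatorname{diag}(0,1,\dots,1)$, is actually better than the paper's. An invertible $J$ has rank $n$ and so violates (i) as well, whereas your example satisfies (i) and (iii) and fails only (ii).
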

	
	\begin{proof}
		Condition (i) gives $\dt J=e_{n}(J)=0$, so $e_{n}(J)=0$ is a
		consequence and not an independent hypothesis.  By \eqref{eq:chi} the
		coefficients of $\lambda^{0},\lambda^{1},\lambda^{2}$ in $\chi_{J}$
		are $(-1)^{n}e_{n}$, $(-1)^{n-1}e_{n-1}$, $(-1)^{n-2}e_{n-2}$; hence
		$e_{n}=e_{n-1}=0$ and $e_{n-2}\neq0$ say exactly that $\lambda^{2}$
		divides $\chi_{J}$ and $\lambda^{3}$ does not, i.e.\ that $0$ is a root
		of multiplicity exactly two.  Condition (i) says $\dim\ker J=1$, i.e.\
		geometric multiplicity one.  The converse is the same computation.
		
		The Jordan structure at $0$ is therefore a single block of size two, so
		$\ker J^{2}=\ker J^{n}$ and $\operatorname{im}J^{2}=\operatorname{im}J^{n}$,
		and \eqref{eq:splitting} is Fitting's decomposition; both summands are
		$J$-invariant, $\dim\Ec=2$, $J|_{\Ec}$ is nilpotent with
		one-dimensional kernel, hence of rank one, and $\Jtr$ has no zero
		eigenvalue.  From $\chi_{J}=\chi_{J|_{\Ec}}\chitr=\lambda^{2}\chitr$,
		comparison of the coefficients of $\lambda^{2}$ and $\lambda^{3}$
		gives the two identities in \eqref{eq:enk}.
		
		Minimality.  Dropping (iii): if $J=N_{3}\oplus\Jtr$ with $N_{3}$ a
		nilpotent Jordan block of size three and $\Jtr$ invertible, then
		$\rank J=n-1$ and $e_{n-1}=0$ but $e_{n-2}=0$, and $0$ has algebraic
		multiplicity three.  Dropping (i): if $J=0_{2\times2}\oplus\Jtr$, then
		$e_{n-1}=e_{n}=0$ and $e_{n-2}=\dt\Jtr\neq0$ but $\rank J=n-2$, there
		is no Jordan chain and $q_{1}$ below does not exist.  Dropping (ii):
		if $J$ is invertible then $\ker J=0$.
	\end{proof}
	
	\begin{remark}
		For $n=2$, (i)--(iii) read $\rank J=1$, $\tr J=0$, $e_{0}=1\neq0$, i.e.\
		$J$ nilpotent of rank one, and \eqref{eq:enk} reads
		$\chi_{J}=\lambda^{2}$, $e_{0}=1=\dt\Jtr$ and $e_{-1}=0$, with the
		conventions \eqref{eq:empty}.
	\end{remark}
	
	\subsection{The coefficients \texorpdfstring{$a$ and $b$}{a and b}}
	
	\begin{definition}[Adapted chain and the pair $(a,b)$]\label{def:ab}
		Under $(\mathrm{BT}_{0})$, let $q_{0}$ span $\ker J$ and choose
		$q_{1}\in\Ec$ with $Jq_{1}=q_{0}$.  Let $p_{0},p_{1}$ be the covectors
		determined by
		\begin{equation}\label{eq:dual}
			\ip{p_{0}}{q_{0}}=\ip{p_{1}}{q_{1}}=1,\quad
			\ip{p_{0}}{q_{1}}=\ip{p_{1}}{q_{0}}=0,\quad
			p_{0}|_{\Etr}=p_{1}|_{\Etr}=0 .
		\end{equation}
		With $B(u,v):=D^{2}X(p)[u,v]$, the BT coefficients associated with
		$(X,p,q_{0})$ are defined by \eqref{eq:abKuz}.
	\end{definition}
	
	\begin{lemma}\label{lem:welldef}
		Under $(\mathrm{BT}_{0})$, $q_{1}$ exists, $(p_{0},p_{1})$ is uniquely
		determined by \eqref{eq:dual} and satisfies $J^{\top}p_{1}=0$,
		$J^{\top}p_{0}=p_{1}$.  Moreover:
		\begin{enumerate}[label=(\alph*)]
			\item the pair $(a,b)$ does not depend on the choice of $q_{1}$;
			\item replacing $q_{0}$ by $cq_{0}$, $c\neq0$, forces
			$q_{1}\mapsto cq_{1}$, $p_{i}\mapsto c^{-1}p_{i}$, and produces
			$(a,b)\mapsto(ca,cb)$.
		\end{enumerate}
	\end{lemma}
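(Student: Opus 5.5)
Everything will be done in the splitting $\R^{n}=\Ec\oplus\Etr$ of Lemma~\ref{lem:split}. Since $q_{0}\in\ker J\subset\Ec$ and $J|_{\Ec}$ is nilpotent of rank one, we have $q_{0}\in\operatorname{im}(J|_{\Ec})$. Hence there is $q_{1}\in\Ec$ with $Jq_{1}=q_{0}$. It is independent of $q_{0}$, because $Jq_{0}=0\neq q_{0}$, so $\{q_{0},q_{1}\}$ is a basis of $\Ec$. The conditions \eqref{eq:dual} prescribe $p_{0},p_{1}$ on $\Ec$ through their values on this basis, and on $\Etr$ they are required to vanish. A covector is determined by its restrictions to the two summands, so existence and uniqueness of $(p_{0},p_{1})$ follow.

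For the transpose relations I will test against a basis. For $v\in\Etr$, $Jv\in\Etr$ by invariance, so $\ip{J^{\top}p_{i}}{v}=\ip{p_{i}}{Jv}=0$. On $\Ec$ I compute directly. For $p_{1}$: $\ip{p_{1}}{Jq_{0}}=0$ and $\ip{p_{1}}{Jq_{1}}=\ip{p_{1}}{q_{0}}=0$, which gives $J^{\top}p_{1}=0$. For $p_{0}$: $\ip{p_{0}}{Jq_{0}}=0=\ip{p_{1}}{q_{0}}$ and $\ip{p_{0}}{Jq_{1}}=\ip{p_{0}}{q_{0}}=1=\ip{p_{1}}{q_{1}}$. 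Both functionals also vanish on $\Etr$, so $J^{\top}p_{0}=p_{1}$.

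For (a), the other admissible choices are $q_{1}'=q_{1}+tq_{0}$, since $\ker J\cap\Ec=\operatorname{span}q_{0}$. The dual pair must then change, because \eqref{eq:dual} now refers to $q_{1}'$. Solving the $2\times 2$ biorthogonality conditions on $\Ec$, I expect $p_{1}'=p_{1}$ and $p_{0}'=p_{0}-tp_{1}$; both still vanish on $\Etr$. The check is $\ip{p_{0}'}{q_{1}'}=0+t-t=0$ and $\ip{p_{1}'}{q_{1}'}=1$. Then $a$ is unchanged because $p_{1}$ is. For $b$,
\[
b'=\ip{p_{0}-tp_{1}}{B(q_{0},q_{0})}+\ip{p_{1}}{B(q_{0},q_{1}+tq_{0})}=b,
\]
since the two $t$-terms cancel by bilinearity. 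This cancellation is the only point needing care, and it is the step I regard as the crux.

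For (b), replace $q_{0}$ by $cq_{0}$. Then $cq_{1}$ is an admissible partner of $cq_{0}$, and by (a) any partner gives the same $(a,b)$, so it suffices to use $cq_{1}$. Scaling \eqref{eq:dual} shows that $c^{-1}p_{0},c^{-1}p_{1}$ satisfy it, so by uniqueness they are the dual pair. Bilinearity of $B$ then gives
\[
\tfrac12\ip{c^{-1}p_{1}}{c^{2}B(q_{0},q_{0})}=ca,
\]
and likewise each term of $b$ acquires the factor $c^{-1}c^{2}=c$.
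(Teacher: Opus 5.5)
Your proof is correct and follows essentially the same route as the paper. Both arguments obtain $q_{1}$ from $\operatorname{im}J|_{\Ec}=\ker J|_{\Ec}=\langle q_{0}\rangle$, realize $(p_{0},p_{1})$ as the first two vectors of a dual basis, and check the transpose relations on $q_{0}$, $q_{1}$ and $\Etr$. For (a) both use the substitution $q_{1}\mapsto q_{1}+tq_{0}$, $p_{0}\mapsto p_{0}-tp_{1}$; for (b) both reduce to $q_{1}\mapsto cq_{1}$ via (a) and then count degrees $2-1=1$. The step you leave implicit, $q_{0}\in\operatorname{im}(J|_{\Ec})$, holds because a rank-one nilpotent endomorphism of a plane squares to zero, so its image and kernel are the same line.
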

	
	\begin{proof}
		Since $J|_{\Ec}$ is nilpotent of rank one, $\operatorname{im}J|_{\Ec}
		=\ker J|_{\Ec}=\langle q_{0}\rangle$, so $q_{1}$ exists and
		$(q_{0},q_{1})$ is a basis of $\Ec$; together with any basis of $\Etr$
		it is a basis of $\R^{n}$, and \eqref{eq:dual} says that
		$(p_{0},p_{1})$ are the first two vectors of the dual basis.  For any
		$v$, $\ip{J^{\top}p_{1}}{v}=\ip{p_{1}}{Jv}$, which vanishes for
		$v=q_{0}$ ($Jq_{0}=0$), for $v=q_{1}$ ($Jq_{1}=q_{0}\perp p_{1}$) and
		for $v\in\Etr$ ($Jv\in\Etr$); hence $J^{\top}p_{1}=0$.  The same
		computation gives $\ip{J^{\top}p_{0}}{q_{0}}=0=\ip{p_{1}}{q_{0}}$,
		$\ip{J^{\top}p_{0}}{q_{1}}=\ip{p_{0}}{q_{0}}=1=\ip{p_{1}}{q_{1}}$ and
		$\ip{J^{\top}p_{0}}{v}=0=\ip{p_{1}}{v}$ for $v\in\Etr$, i.e.\
		$J^{\top}p_{0}=p_{1}$.
		
		(a) The only freedom in $q_{1}$ is $q_{1}\mapsto q_{1}+dq_{0}$,
		$d\in\R$, and the dual basis then changes by $p_{0}\mapsto p_{0}-dp_{1}$,
		$p_{1}\mapsto p_{1}$.  In \eqref{eq:abKuz}, $a$ involves neither
		$q_{1}$ nor $p_{0}$ and is unchanged, while
		\[
		b\longmapsto
		\ip{p_{0}-dp_{1}}{B(q_{0},q_{0})}+\ip{p_{1}}{B(q_{0},q_{1}+dq_{0})}
		=b-d\ip{p_{1}}{B(q_{0},q_{0})}+d\ip{p_{1}}{B(q_{0},q_{0})}=b .
		\]
		Equivalently, in the notation of Section~\ref{sec:gen}, the diagonal
		entries of the central block change by $[L_{c}]_{11}\mapsto
		[L_{c}]_{11}-2ad$ and $[L_{c}]_{22}\mapsto[L_{c}]_{22}+2ad$, so
		$b=\tr L_{c}$ is invariant.
		
		(b) $Jq_{1}'=cq_{0}$ forces $q_{1}'\in cq_{1}+\ker J$, and by (a) we
		may take $q_{1}'=cq_{1}$; the conditions \eqref{eq:dual} then give
		$p_{0}\mapsto c^{-1}p_{0}$ and $p_{1}\mapsto c^{-1}p_{1}$, since the
		dual frame of a rescaled frame is rescaled by the inverse factor.
		Both expressions in \eqref{eq:abKuz} are homogeneous of degree two in
		the $q$'s and of degree $-1$ in the $p$'s; the two degrees do not
		cancel but combine, and the total degree in $c$ is $2-1=1$.  Hence
		$(a,b)\mapsto(ca,cb)$, and not $(c^{2}a,c^{2}b)$.
	\end{proof}
	
	\begin{remark}\label{rem:scale}
		Consequently $a$ and $b$ are attached to the pair $(X,q_{0})$ and not
		to $X$ alone, and two authors who normalize $q_{0}$ differently will
		report values differing by the corresponding factor $c$.  What is
		independent of the normalization is the vanishing of $a$, the
		vanishing of $b$, and the sign of $ab$, since $ab\mapsto c^{2}ab$ with
		$c^{2}>0$.  Example~\ref{ex:diasmello} exhibits this against a
		published computation.
	\end{remark}
	
	Formula \eqref{eq:main} is homogeneous of degree one in $q_{0}$, hence
	compatible with Lemma~\ref{lem:welldef}(b).
	
	\subsection{Identification with the normal-form coefficients}
	
	Definition~\ref{def:ab} is algebraic and involves only the $2$-jet of
	$X$.  The next lemma is the only place in the paper where hyperbolicity
	of $\Jtr$, a centre manifold, and $C^{3}$ regularity are used; they are
	needed to \emph{name} $a$ and $b$, not to prove
	Theorems~\ref{thm:gen} and \ref{thm:main}.
	
	\begin{lemma}[Identification]\label{lem:identify}
		Let $(X,p)$ satisfy $(\mathrm{BT}_{0})$, let $X$ be of class $C^{3}$
		and let $\Jtr$ be hyperbolic.  Let $W^{c}$ be a local centre manifold
		at $p$.  Then in suitable local coordinates $u=(u_{1},u_{2})$ on
		$W^{c}$, obtained from the chart induced by the basis $(q_{0},q_{1})$
		by a near-identity quadratic change, the restriction of $X$ to $W^{c}$
		is \eqref{eq:BT} with
		\[
		a=\tfrac12\ip{p_{1}}{B(q_{0},q_{0})},\qquad
		b=\ip{p_{0}}{B(q_{0},q_{0})}+\ip{p_{1}}{B(q_{0},q_{1})} ,
		\]
		that is, with the numbers of Definition~\ref{def:ab}.
	\end{lemma}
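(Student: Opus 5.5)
We take the standard centre-manifold computation in the spirit of \cite{Kuznetsov2004} and carry it out only to quadratic order. The approach is to compute the quadratic part of the reduced field in the chart $w\mapsto p+w_{1}q_{0}+w_{2}q_{1}$, then remove the nonresonant terms with one quadratic change of coordinates.

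First we set up the reduced field. Since $X$ is $C^{3}$ and $\Jtr$ is hyperbolic, the centre manifold theorem gives a $C^{2}$ (indeed $C^{3}$ locally) manifold $W^{c}$ tangent to $\Ec$ at $p$. We parametrize it as
\[
x=p+w_{1}q_{0}+w_{2}q_{1}+h(w),\qquad h(w)\in\Etr,\qquad h(w)=\mathcal O(|w|^{2}).
\]
Projecting the flow onto $\Ec$ with the covectors $p_{0},p_{1}$, which annihilate $\Etr$ by \eqref{eq:dual}, gives the reduced field
\[
\dot w_{i}=\ip{p_{i}}{X(p+w_{1}q_{0}+w_{2}q_{1}+h(w))}.
\]
We expand this as $X(p+v)=Jv+\tfrac12B(v,v)+\mathcal O(|v|^{3})$. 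The key observation is that $h$ contributes at quadratic order only through $Jh$, and $Jh\in\Etr$ is killed by $p_{i}$. Every other contribution of $h$ is cubic. The reduced field is therefore
\[
\dot w_{1}=w_{2}+\tfrac12\ip{p_{0}}{B(w,w)}+\mathcal O(3),\qquad
\dot w_{2}=\tfrac12\ip{p_{1}}{B(w,w)}+\mathcal O(3),
\]
where $B(w,w)$ abbreviates $B(w_{1}q_{0}+w_{2}q_{1},w_{1}q_{0}+w_{2}q_{1})$. In particular the quadratic terms need no knowledge of $h$. This is why only the $2$-jet of $X$ enters, and why hyperbolicity is used only for existence.

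Next we bring the field to the form \eqref{eq:BT}. Write the quadratic terms as
\[
\dot w_{1}=w_{2}+\alpha_{20}w_{1}^{2}+\alpha_{11}w_{1}w_{2}+\alpha_{02}w_{2}^{2},\qquad
\dot w_{2}=\beta_{20}w_{1}^{2}+\beta_{11}w_{1}w_{2}+\beta_{02}w_{2}^{2}.
\]
We apply the near-identity change $u_{1}=w_{1}$, $u_{2}=\dot w_{1}$ truncated at order two, that is,
\[
u_{2}=w_{2}+\alpha_{20}w_{1}^{2}+\alpha_{11}w_{1}w_{2}+\alpha_{02}w_{2}^{2}.
\]
Then $\dot u_{1}=u_{2}$ exactly to second order. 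Differentiating $u_{2}$ and substituting $\dot w_{1}=w_{2}+\dots$ gives
\[
\dot u_{2}=\beta_{20}w_{1}^{2}+(\beta_{11}+2\alpha_{20})w_{1}w_{2}+(\beta_{02}+\alpha_{11})w_{2}^{2}+\mathcal O(3).
\]
Rewriting in terms of $u$ changes only cubic terms. So $a=\beta_{20}$ and $b=\beta_{11}+2\alpha_{20}$, while the remaining $u_{2}^{2}$ term is removed by a further standard quadratic change in the Takens form, $u_{1}\mapsto u_{1}+cu_{1}^{2}$-type adjustments. That further change does not alter the $u_{1}^{2}$ and $u_{1}u_{2}$ coefficients; I will check this by a direct Lie-bracket computation with the linear part $u_{2}\partial_{u_{1}}$.

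The remaining step is to read off the coefficients. We have
\[
\beta_{20}=\tfrac12\ip{p_{1}}{B(q_{0},q_{0})}=a,\qquad
\beta_{11}=\ip{p_{1}}{B(q_{0},q_{1})},\qquad
2\alpha_{20}=\ip{p_{0}}{B(q_{0},q_{0})}.
\]
Their sum gives $b$ exactly as in Definition~\ref{def:ab}.

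The main obstacle I expect is making the removal of the $u_{2}^{2}$ term and the claim that it leaves $a,b$ unchanged fully rigorous. For this I would invoke the homological equation on the space of quadratic vector fields. That space is six-dimensional, and the image of $\operatorname{ad}$ of the linear part is four-dimensional. I would verify that the complement spanned by $u_{1}^{2}\partial_{u_{2}}$ and $u_{1}u_{2}\partial_{u_{2}}$ gets its projection unchanged by coboundaries. A secondary point is the regularity count: the centre manifold is $C^{3}$ when $X$ is $C^{3}$, which is enough for the second-order Taylor expansion with a cubic remainder.
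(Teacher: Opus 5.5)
Your proposal is correct and is essentially the paper's proof: the reduction step (the graph map $h$ enters the quadratic part only through $Jh\in\Etr$, which $p_{0},p_{1}$ annihilate) is the same, and your two-stage change --- first $u_{2}:=$ the quadratic truncation of $\dot w_{1}$, then removal of the $u_{2}^{2}$ term --- composes, modulo cubic terms, to one of the solutions of the homological equation that the paper writes down in a single step, yielding the same $a=\beta_{20}$ and $b=2\alpha_{20}+\beta_{11}$. The only point to make precise is that the second change must move both coordinates, so ``$u_{1}\mapsto u_{1}+cu_{1}^{2}$'' works only with $u_{2}$ redefined as the new $\dot u_{1}$: for example $u=v+h(v)$ with $h=\bigl(\tfrac{\gamma}{2}v_{1}^{2},\,\gamma v_{1}v_{2}\bigr)$ and $\gamma=\beta_{02}+\alpha_{11}$ gives $Dh(v)\,Nv-Nh(v)=(0,\gamma v_{2}^{2})$ with $Nv=(v_{2},0)$, so exactly the $u_{2}^{2}$ coefficient is cancelled and no other quadratic coefficient changes.
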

	
	\begin{proof}
		\emph{Step 1: the quadratic part of the reduced field.}  Since $\Jtr$
		is hyperbolic and $X\in C^{3}$, a local centre manifold exists and is
		of class $C^{3}$; it is the graph
		$W^{c}=\{p+y+h(y): y\in\Ec,\ |y|<\varepsilon\}$ of a map
		$h:\Ec\to\Etr$ with $h(0)=0$ and $Dh(0)=0$, so $h(y)=\mathcal O(|y|^{2})$.
		Let $\Pi_{c}$ be the projection onto $\Ec$ along $\Etr$; the reduced
		field in the coordinate $y$ is $\dot y=\Pi_{c}X(p+y+h(y))$.  Writing
		$u=y+h(y)$ and using $X(p+u)=Ju+\tfrac12B(u,u)+\mathcal O(|u|^{3})$,
		\[
		\Pi_{c}Ju=\Pi_{c}Jy+\Pi_{c}Jh(y)=Jy ,
		\]
		because $\Ec$ is $J$-invariant and $Jh(y)\in\Etr$ ($\Etr$ is
		$J$-invariant), and
		\[
		B(u,u)=B(y,y)+2B(y,h(y))+B(h(y),h(y))=B(y,y)+\mathcal O(|y|^{3}) ,
		\]
		because $B$ is bounded bilinear and $h(y)=\mathcal O(|y|^{2})$.  Hence
		\begin{equation}\label{eq:reduced}
			\dot y=J|_{\Ec}\,y+\tfrac12\Pi_{c}B(y,y)+\mathcal O(|y|^{3}).
		\end{equation}
		In particular the quadratic part of the reduced field does not involve
		$h$; the centre manifold is not unique, but its $2$-jet is, and in any
		case only $Dh(0)=0$ was used.
		
		Write $y=y_{1}q_{0}+y_{2}q_{1}$ and pair \eqref{eq:reduced} with
		$p_{0}$ and $p_{1}$.  Since $Jq_{0}=0$ and $Jq_{1}=q_{0}$ we have
		$J|_{\Ec}y=y_{2}q_{0}$, so
		\begin{equation}\label{eq:planarform}
			\begin{aligned}
				\dot y_{1}&=y_{2}
				+\alpha_{20}y_{1}^{2}+\alpha_{11}y_{1}y_{2}+\alpha_{02}y_{2}^{2}
				+\mathcal O(|y|^{3}),\\
				\dot y_{2}&=\ \ \ \ \ \,
				\beta_{20}y_{1}^{2}+\beta_{11}y_{1}y_{2}+\beta_{02}y_{2}^{2}
				+\mathcal O(|y|^{3}),
			\end{aligned}
		\end{equation}
		with
		\begin{equation}\label{eq:albe}
			\begin{gathered}
				\alpha_{20}=\tfrac12\ip{p_{0}}{B(q_{0},q_{0})},\quad
				\alpha_{11}=\ip{p_{0}}{B(q_{0},q_{1})},\quad
				\alpha_{02}=\tfrac12\ip{p_{0}}{B(q_{1},q_{1})},\\
				\beta_{20}=\tfrac12\ip{p_{1}}{B(q_{0},q_{0})},\quad
				\beta_{11}=\ip{p_{1}}{B(q_{0},q_{1})},\quad
				\beta_{02}=\tfrac12\ip{p_{1}}{B(q_{1},q_{1})}.
			\end{gathered}
		\end{equation}
		
		\emph{Step 2: the quadratic normal form of \eqref{eq:planarform}.}  Let
		$N=\left(\begin{smallmatrix}0&1\\0&0\end{smallmatrix}\right)$ be the
		linear part of \eqref{eq:planarform} and let
		$L_{N}(h)(x):=Dh(x)Nx-Nh(x)$ be the homological operator on quadratic
		vector fields.  Under the near-identity change $y=x+h(x)$ with $h$
		quadratic, the quadratic part $F_{2}$ of \eqref{eq:planarform} becomes
		$F_{2}-L_{N}(h)+\mathcal O(|x|^{3})$.  Writing
		\[
		h_{1}=p_{1}'x_{1}^{2}+p_{2}'x_{1}x_{2}+p_{3}'x_{2}^{2},
		\qquad
		h_{2}=q_{1}'x_{1}^{2}+q_{2}'x_{1}x_{2}+q_{3}'x_{2}^{2},
		\]
		and using $Nx=(x_{2},0)$, one computes
		\[
		L_{N}(h)=\bigl(x_{2}\partial_{1}h_{1}-h_{2},\ x_{2}\partial_{1}h_{2}\bigr)
		=\Bigl(-q_{1}'x_{1}^{2}+(2p_{1}'-q_{2}')x_{1}x_{2}+(p_{2}'-q_{3}')x_{2}^{2},
		\ \ 2q_{1}'x_{1}x_{2}+q_{2}'x_{2}^{2}\Bigr).
		\]
		The transformed quadratic coefficients are therefore
		\[
		\begin{array}{lll}
			x_{1}^{2}:&\alpha_{20}+q_{1}', &\beta_{20},\\
			x_{1}x_{2}:&\alpha_{11}-2p_{1}'+q_{2}', &\beta_{11}-2q_{1}',\\
			x_{2}^{2}:&\alpha_{02}-p_{2}'+q_{3}', &\beta_{02}-q_{2}' .
		\end{array}
		\]
		Choosing
		\[
		q_{1}'=-\alpha_{20},\qquad
		q_{2}'=\beta_{02},\qquad
		p_{1}'=\tfrac12(\alpha_{11}+\beta_{02}),\qquad
		p_{2}'=\alpha_{02}+q_{3}',
		\]
		with $q_{3}',p_{3}'$ arbitrary, annihilates all three coefficients of
		the first component and the $x_{2}^{2}$ coefficient of the second, and
		leaves
		\[
		\dot x_{1}=x_{2}+\mathcal O(|x|^{3}),\qquad
		\dot x_{2}=\beta_{20}x_{1}^{2}+(2\alpha_{20}+\beta_{11})x_{1}x_{2}
		+\mathcal O(|x|^{3}).
		\]
		Hence $a_{\mathrm{NF}}=\beta_{20}$ and
		$b_{\mathrm{NF}}=2\alpha_{20}+\beta_{11}$.  Substituting
		\eqref{eq:albe} gives exactly \eqref{eq:abKuz}.
	\end{proof}
	
	\begin{remark}[Regularity]\label{rem:reg}
		The two sides of \eqref{eq:main} depend only on the $2$-jet of $X$ at
		$p$, so $X\in C^{2}$ suffices for
		Theorems~\ref{thm:gen} and \ref{thm:main} and no remainder term
		appears there.  Class $C^{3}$ is used only in
		Lemma~\ref{lem:identify}: for a merely $C^{2}$ field Taylor's theorem
		gives a remainder $o(|u|^{2})$ rather than $\mathcal O(|u|^{3})$, and
		the local centre manifold need not be $C^{3}$.  Accordingly all
		statements about \eqref{eq:BT} are made under $X\in C^{3}$, and all
		statements about the spectral invariants under $X\in C^{2}$.
	\end{remark}
	
	\section{The generating identity}\label{sec:gen}
	
	Throughout this section $(X,p)$ satisfies $(\mathrm{BT}_{0})$ with
	$X\in C^{2}$, $q_{0}$ spans $\ker J$, and
	\begin{equation}\label{eq:L}
		L:=D(DX)_{p}[q_{0}]
		=\left.\frac{d}{ds}\right|_{s=0}DX(p+sq_{0}),
		\qquad Lw=B(q_{0},w).
	\end{equation}
	We write $D_{q_{0}}f:=\frac{d}{ds}\big|_{0}f(p+sq_{0})$, and
	$L_{c}:=\Pi_{c}L|_{\Ec}$, $\Ltr:=\Pi_{\mathrm{tr}}L|_{\Etr}$ for the two
	diagonal blocks of $L$ in the splitting \eqref{eq:splitting}, which are
	intrinsically defined endomorphisms of $\Ec$ and $\Etr$.
	
	\begin{lemma}[Jacobi]\label{lem:jacobi}
		For $A,L\in\R^{m\times m}$,
		$\frac{d}{dt}\big|_{0}\dt(A+tL)=\tr(\adj(A)L)$, and consequently
		\begin{equation}\label{eq:dchi}
			\left.\frac{d}{dt}\right|_{0}\chi_{A+tL}(\lambda)
			=-\tr\bigl(\adj(\lambda I-A)\,L\bigr).
		\end{equation}
	\end{lemma}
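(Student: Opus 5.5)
I will prove Jacobi's formula first for invertible $A$ and then extend it to all $A$ by density. After that, \eqref{eq:dchi} follows by substituting $\lambda I-A$ for $A$.

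\emph{Step 1 (invertible case).} For $\dt A\neq0$ write $\dt(A+tL)=\dt A\,\dt(I+tA^{-1}L)$. The expansion $\dt(I+tM)=1+t\tr M+\mathcal O(t^{2})$ can be read off from the Leibniz expansion of the determinant. Only the diagonal product $\prod_{i}(1+tM_{ii})$ contributes at order $t$, because every other permutation moves at least two indices and so picks up at least two off-diagonal factors $tM_{ij}$. Hence
\[
\left.\frac{d}{dt}\right|_{0}\dt(A+tL)=\dt A\,\tr(A^{-1}L)=\tr(\adj(A)L),
\]
using $\adj A=\dt(A)\,A^{-1}$.

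\emph{Step 2 (extension to all $A$).} Both sides are polynomial in the entries of $A$ and $L$. The left side is the coefficient of $t$ in the polynomial $\dt(A+tL)$. The right side is polynomial because the entries of $\adj A$ are cofactors. The two polynomials agree on the dense open set $\{\dt A\neq0\}$, so they agree everywhere. Alternatively, one can avoid density: expand $\dt$ by multilinearity in the rows. The derivative is then $\sum_{i,j}L_{ij}C_{ij}$, where $C_{ij}$ is the $(i,j)$ cofactor of $A$, and this sum is $\tr(\adj(A)L)$ because $\adj(A)_{ji}=C_{ij}$. I would present this second argument, since it is one line and covers $\dt A=0$ directly. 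That case is exactly the one used later, where $A=\lambda I-J$ and $J$ is singular.

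\emph{Step 3 (characteristic polynomial).} Fix $\lambda$ and apply the formula to the matrix $\lambda I-A$ perturbed in direction $-L$:
\[
\chi_{A+tL}(\lambda)=\dt\bigl((\lambda I-A)+t(-L)\bigr),
\]
so the derivative at $t=0$ is $-\tr(\adj(\lambda I-A)L)$. This is an identity of polynomials in $\lambda$, because it holds for each $\lambda$. For $m=0$ both sides vanish, consistent with \eqref{eq:empty}.

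There is no real obstacle here. The only point needing care is that the formula must hold at singular $A$, which is the case of interest. The cofactor/multilinearity argument handles this without any limiting process.
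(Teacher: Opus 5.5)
Your proof is correct. The argument you say you would present is essentially the paper's proof: differentiate by multilinearity, expand along the replaced line to get $\sum_{i,j}L_{ij}C_{ij}$, identify this with $\tr(\adj(A)L)$ via $\adj(A)_{ji}=C_{ij}$, and then write $\lambda I-(A+tL)=(\lambda I-A)-tL$. The paper expands along columns rather than rows, which makes no difference, and your invertible-case-plus-density route in Steps 1--2 is valid but, as you note, unnecessary.
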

	
	\begin{proof}
		The determinant is multilinear in the columns, so
		$\frac{d}{dt}\big|_{0}\dt(A+tL)=\sum_{j}\dt(A^{(j)})$, where $A^{(j)}$
		is $A$ with its $j$-th column replaced by the $j$-th column of $L$.
		Expanding $\dt(A^{(j)})$ along that column gives
		$\sum_{i}L_{ij}C_{ij}$ with $C_{ij}$ the $(i,j)$ cofactor of $A$, and
		since $\adj(A)_{ji}=C_{ij}$ this is $(\adj(A)L)_{jj}$.  Summing over
		$j$ gives $\tr(\adj(A)L)$.  For the second identity,
		$\lambda I-(A+tL)=(\lambda I-A)-tL$, so the derivative equals
		$\tr(\adj(\lambda I-A)(-L))$.
	\end{proof}
	
	\begin{lemma}[Block adjugate]\label{lem:blockadj}
		If $A=\operatorname{diag}(A_{1},A_{2})$ with $A_{i}$ square, then
		\[
		\adj(A)=\operatorname{diag}\bigl(\dt(A_{2})\adj(A_{1}),\
		\dt(A_{1})\adj(A_{2})\bigr).
		\]
	\end{lemma}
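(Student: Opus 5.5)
The plan is to verify the formula entrywise via cofactors, using the block structure of $A$ to decide which minors vanish and which factor. Write $A_{1}\in\R^{m_{1}\times m_{1}}$, $A_{2}\in\R^{m_{2}\times m_{2}}$ and $m=m_{1}+m_{2}$. Recall $\adj(A)_{ji}=C_{ij}=(-1)^{i+j}\dt A(i|j)$, where $A(i|j)$ is $A$ with row $i$ and column $j$ deleted. I will split the index pairs $(i,j)$ into the two diagonal blocks and the two off-diagonal blocks, and treat each case in turn.

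\emph{Diagonal blocks.} If $i,j\le m_{1}$, deleting row $i$ and column $j$ leaves a matrix that is still block diagonal, namely $\operatorname{diag}(A_{1}(i|j),A_{2})$. Its determinant is therefore $\dt A_{1}(i|j)\,\dt A_{2}$. The sign $(-1)^{i+j}$ is the same as the sign of the $(i,j)$ cofactor of $A_{1}$, so $C_{ij}(A)=\dt(A_{2})\,C_{ij}(A_{1})$, which gives the first diagonal block. The case $i,j>m_{1}$ is symmetric: after shifting indices by $m_{1}$ the parity of $i+j$ is unchanged, and one obtains $\dt(A_{1})\adj(A_{2})$.

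\emph{Off-diagonal blocks.} Take $i\le m_{1}<j$. Then $A(i|j)$ has its first $m_{1}-1$ rows supported in the first $m_{1}$ columns, all of which survive the deletion of column $j$. Its last $m_{2}$ rows are supported in the last $m_{2}-1$ columns. Those $m_{2}$ rows therefore lie in a space of dimension $m_{2}-1$, so they are linearly dependent and the determinant is $0$. The case $j\le m_{1}<i$ is the same argument with rows and columns exchanged.

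\emph{Degenerate sizes.} These are the only delicate point. When $m_{1}=0$ or $m_{2}=0$, the conventions \eqref{eq:empty} ($\dt=1$, $\adj=1$ for the empty matrix) make the statement tautological. When some $m_{i}=1$, one uses $\adj$ of a $1\times1$ matrix equal to $1$, and the dependency argument still applies since $m_{2}$ rows lie in a space of dimension $m_{2}-1\ge0$.

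As an alternative check when both blocks are invertible, $\adj(A)=\dt(A)A^{-1}=\dt A_{1}\dt A_{2}\operatorname{diag}(A_{1}^{-1},A_{2}^{-1})$ gives the formula immediately. It then extends to all $A_{1},A_{2}$ by density of invertible matrices and continuity of both sides, which are polynomial in the entries. I would actually present this density argument as the main proof, because it avoids sign bookkeeping, and keep the cofactor computation only as a remark.
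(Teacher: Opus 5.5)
Your proposal is correct, and the argument you choose as the main proof is exactly the paper's: the invertible case via $\adj(A)=\dt(A)A^{-1}$, extended to all $A_{1},A_{2}$ by density of invertible pairs and polynomiality of both sides (the paper phrases the density as Zariski density). The entrywise cofactor computation you sketch is also correct and gives a limit-free alternative.
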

	
	\begin{proof}
		Both sides are polynomial in the entries; for $A_{1},A_{2}$ invertible,
		$\adj(A)=\dt(A)A^{-1}
		=\dt(A_{1})\dt(A_{2})\operatorname{diag}(A_{1}^{-1},A_{2}^{-1})$, which
		is the right-hand side.  The invertible pairs form a Zariski-dense
		subset, so the polynomial identity holds everywhere.  With
		\eqref{eq:empty} the statement is also correct when one block is empty.
	\end{proof}
	
	\begin{theorem}[Generating identity]\label{thm:gen}
		Under $(\mathrm{BT}_{0})$ with $X\in C^{2}$, for every $\lambda$,
		\begin{equation}\label{eq:gen}
			\sum_{k=0}^{n}(-1)^{k}\bigl(D_{q_{0}}e_{k}\bigr)\lambda^{n-k}
			=-\chitr(\lambda)\,\bigl(b\lambda+2a\bigr)
			-\lambda^{2}\,
			\tr\bigl(\adj(\lambda I-\Jtr)\,\Ltr\bigr),
		\end{equation}
		where $D_{q_{0}}e_{k}$ abbreviates
		$\frac{d}{ds}\big|_{0}e_{k}\bigl(DX(p+sq_{0})\bigr)$.  For $n=2$ the
		last term is $0$ by \eqref{eq:empty}.
	\end{theorem}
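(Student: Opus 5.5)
The plan is to differentiate the characteristic polynomial along $q_{0}$ and evaluate the resulting trace in a basis adapted to the splitting \eqref{eq:splitting}. By \eqref{eq:chi}, the left-hand side of \eqref{eq:gen} is exactly $\frac{d}{ds}\big|_{0}\chi_{DX(p+sq_{0})}(\lambda)$, since the $e_{k}$ enter $\chi$ linearly with coefficients $(-1)^{k}\lambda^{n-k}$. The first-order expansion $DX(p+sq_{0})=J+sL+o(s)$, with $L$ as in \eqref{eq:L}, together with the chain rule and Lemma~\ref{lem:jacobi}, then gives
\[
\sum_{k=0}^{n}(-1)^{k}\bigl(D_{q_{0}}e_{k}\bigr)\lambda^{n-k}=-\tr\bigl(\adj(\lambda I-J)\,L\bigr).
\]
This is a polynomial identity in $\lambda$. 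It holds for every $\lambda$: the only thing differentiated is a polynomial in the entries of $DX$, and $DX$ is $C^{1}$ along the line $p+sq_{0}$.

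Next I will pass to the basis $(q_{0},q_{1},\text{basis of }\Etr)$. Both $\chi$ and $\tr(\adj(\cdot)\,\cdot)$ are invariant under simultaneous conjugation, since $\adj(SMS^{-1})=S\adj(M)S^{-1}$. In this basis $J=\operatorname{diag}(N,\Jtr)$ with $N=\left(\begin{smallmatrix}0&1\\0&0\end{smallmatrix}\right)$, because $Jq_{0}=0$ and $Jq_{1}=q_{0}$ (Lemma~\ref{lem:split}, Definition~\ref{def:ab}). By Lemma~\ref{lem:blockadj},
\[
\adj(\lambda I-J)=\operatorname{diag}\bigl(\chitr(\lambda)\adj(\lambda I-N),\ \lambda^{2}\adj(\lambda I-\Jtr)\bigr),
\]
using $\dt(\lambda I-N)=\lambda^{2}$. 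Since this matrix is block diagonal, only the diagonal blocks of $L$ survive in the trace, which gives
\[
\tr(\adj(\lambda I-J)L)=\chitr(\lambda)\tr\bigl(\adj(\lambda I-N)L_{c}\bigr)+\lambda^{2}\tr\bigl(\adj(\lambda I-\Jtr)\Ltr\bigr).
\]
Here $L_{c}$ and $\Ltr$ are the matrices of the blocks defined after \eqref{eq:L}. The dual basis of the adapted basis has first two covectors $p_{0},p_{1}$ by \eqref{eq:dual}. Hence $(L_{c})_{ij}=\ip{p_{i-1}}{B(q_{0},q_{j-1})}$, by $Lw=B(q_{0},w)$.

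It remains to compute the central term. We have $\adj(\lambda I-N)=\left(\begin{smallmatrix}\lambda&1\\0&\lambda\end{smallmatrix}\right)$, so
\[
\tr\bigl(\adj(\lambda I-N)L_{c}\bigr)=\lambda\,\tr L_{c}+(L_{c})_{21}.
\]
Now $(L_{c})_{21}=\ip{p_{1}}{B(q_{0},q_{0})}=2a$, and
\[
\tr L_{c}=\ip{p_{0}}{B(q_{0},q_{0})}+\ip{p_{1}}{B(q_{0},q_{1})}=b,
\]
both by \eqref{eq:abKuz}. Substituting and restoring the minus sign yields \eqref{eq:gen}.

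For $n=2$ the transverse block is empty. The conventions \eqref{eq:empty} then give $\chitr=1$ and make the last term vanish, consistently with Lemma~\ref{lem:blockadj} in the empty case.

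The main point requiring care is the block bookkeeping. One must check that the off-diagonal blocks of $L$, which are generally nonzero because $L$ need not preserve the splitting, drop out; they do because $\adj(\lambda I-J)$ is block diagonal. One must also get the orientation of $\adj(\lambda I-N)$ right, so that the single surviving off-diagonal entry pairs with $(L_{c})_{21}$, which is the $p_{1}$-component of $Lq_{0}$, and not with $(L_{c})_{12}$.
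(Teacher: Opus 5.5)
Your proposal is correct and follows essentially the same route as the paper's proof. Both use Jacobi's formula to get $-\tr(\adj(\lambda I-J)L)$, then the block adjugate in the adapted basis $(q_{0},q_{1},\text{basis of }\Etr)$, so that the off-diagonal blocks of $L$ drop out and $\tr(\adj(\lambda I_{2}-N)L_{c})=\lambda\tr L_{c}+[L_{c}]_{21}=b\lambda+2a$.
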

	
	\begin{proof}
		Differentiating \eqref{eq:chi} with $A=DX(p+sq_{0})$ at $s=0$ and using
		the chain rule together with \eqref{eq:dchi} and \eqref{eq:L},
		\begin{equation}\label{eq:step1}
			\sum_{k=0}^{n}(-1)^{k}\bigl(D_{q_{0}}e_{k}\bigr)\lambda^{n-k}
			=-\tr\bigl(\adj(\lambda I-J)\,L\bigr).
		\end{equation}
		Both sides of \eqref{eq:step1} are independent of the basis used to
		compute them, since $\adj(P^{-1}MP)=P^{-1}\adj(M)P$ and the trace of a
		product of endomorphisms is intrinsic.  With respect to
		\eqref{eq:splitting},
		\[
		J=\begin{pmatrix}N&0\\0&\Jtr\end{pmatrix},
		\qquad
		N=[J|_{\Ec}]_{(q_{0},q_{1})}
		=\begin{pmatrix}0&1\\0&0\end{pmatrix},
		\]
		the matrix of $N$ being read off from $Jq_{0}=0$ (first column zero)
		and $Jq_{1}=q_{0}$ (second column $(1,0)^{\top}$).  Since
		$\dt(\lambda I_{2}-N)=\lambda^{2}$, Lemma~\ref{lem:blockadj} gives
		\[
		\adj(\lambda I-J)
		=\begin{pmatrix}
			\chitr(\lambda)\,\adj(\lambda I_{2}-N)&0\\[2pt]
			0&\lambda^{2}\adj(\lambda I-\Jtr)
		\end{pmatrix}.
		\]
		The adjugate being block diagonal, only the diagonal blocks of $L$
		contribute to the trace in \eqref{eq:step1}:
		\begin{equation}\label{eq:step2}
			\tr\bigl(\adj(\lambda I-J)L\bigr)
			=\chitr(\lambda)\,
			\tr\bigl(\adj(\lambda I_{2}-N)\,L_{c}\bigr)
			+\lambda^{2}\tr\bigl(\adj(\lambda I-\Jtr)\,\Ltr\bigr).
		\end{equation}
		Finally
		$\adj\!\begin{psmallmatrix}\lambda&-1\\0&\lambda\end{psmallmatrix}
		=\begin{psmallmatrix}\lambda&1\\0&\lambda\end{psmallmatrix}$, and by
		\eqref{eq:L} and \eqref{eq:dual} the matrix of $L_{c}$ in the basis
		$(q_{0},q_{1})$ is
		\[
		[L_{c}]=\begin{pmatrix}
			\ip{p_{0}}{B(q_{0},q_{0})} & \ip{p_{0}}{B(q_{0},q_{1})}\\[2pt]
			\ip{p_{1}}{B(q_{0},q_{0})} & \ip{p_{1}}{B(q_{0},q_{1})}
		\end{pmatrix},
		\]
		whence, by \eqref{eq:abKuz},
		\begin{equation}\label{eq:step3}
			\tr\!\left(\begin{pmatrix}\lambda&1\\0&\lambda\end{pmatrix}
			[L_{c}]\right)
			=\lambda\bigl([L_{c}]_{11}+[L_{c}]_{22}\bigr)+[L_{c}]_{21}
			=b\lambda+2a.
		\end{equation}
		Substituting \eqref{eq:step3} into \eqref{eq:step2} and the result into
		\eqref{eq:step1} gives \eqref{eq:gen}.
	\end{proof}
	
	\begin{remark}[Degrees]\label{rem:degrees}
		For $n\ge3$, $\deg\chitr=n-2$ and
		$\deg\adj(\lambda I-\Jtr)=n-3$, so both terms on the right of
		\eqref{eq:gen} have degree $n-1$; on the left the term $k=0$ is
		$D_{q_{0}}e_{0}=0$, so the left-hand side also has degree at most
		$n-1$.  For $n=2$ the conventions \eqref{eq:empty} give
		$\chitr\equiv1$, $\Ltr$ empty and last term $0$, so \eqref{eq:gen}
		reads $-D_{q_{0}}e_{1}\lambda+D_{q_{0}}e_{2}=-(b\lambda+2a)$, i.e.
		\[
		D_{q_{0}}e_{1}=b,\qquad D_{q_{0}}e_{2}=-2a,
		\]
		which are the planar identities \eqref{eq:planar}.
	\end{remark}
	
	\section{The main theorem}\label{sec:main}
	
	\begin{theorem}\label{thm:main}
		Under $(\mathrm{BT}_{0})$ with $X\in C^{2}$, and with all invariants
		evaluated at $p$,
		\begin{equation}\label{eq:two}
			D_{q_{0}}e_{n}=-2a\,e_{n-2},
			\qquad
			D_{q_{0}}e_{n-1}=b\,e_{n-2}-2a\,e_{n-3},
		\end{equation}
		and therefore
		\begin{equation}\label{eq:mainbis}
			a=-\frac{1}{2}\,\frac{D_{q_{0}}e_{n}}{e_{n-2}},
			\qquad
			b=\frac{D_{q_{0}}e_{n-1}}{e_{n-2}}
			-\frac{e_{n-3}\,D_{q_{0}}e_{n}}{e_{n-2}^{2}}.
		\end{equation}
		If in addition $X\in C^{3}$ and $\Jtr$ is hyperbolic, then by
		Lemma~\ref{lem:identify} the numbers \eqref{eq:mainbis} are the
		quadratic coefficients of the Bogdanov--Takens normal form
		\eqref{eq:BT} of the restriction of $X$ to a centre manifold at $p$.
	\end{theorem}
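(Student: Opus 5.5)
The plan is to read off the two lowest coefficients of the generating identity \eqref{eq:gen} of Theorem~\ref{thm:gen}, namely those of $\lambda^{0}$ and $\lambda^{1}$. The point is that the contaminating term $-\lambda^{2}\tr(\adj(\lambda I-\Jtr)\Ltr)$ is divisible by $\lambda^{2}$. It therefore contributes nothing to these two coefficients, so only the product $-\chitr(\lambda)(b\lambda+2a)$ matters on the right.

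\emph{Left-hand side.} The coefficient of $\lambda^{0}$ comes from $k=n$ and equals $(-1)^{n}D_{q_{0}}e_{n}$. The coefficient of $\lambda^{1}$ comes from $k=n-1$ and equals $(-1)^{n-1}D_{q_{0}}e_{n-1}$.

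\emph{Right-hand side.} I need the two lowest coefficients of $\chitr$. By \eqref{eq:chi} applied to $\Jtr$ of size $n-2$, one has $\chitr(\lambda)=\sum_{j}(-1)^{j}e_{j}(\Jtr)\lambda^{n-2-j}$. Its constant term is $(-1)^{n-2}e_{n-2}(\Jtr)$, and its $\lambda^{1}$ coefficient is $(-1)^{n-3}e_{n-3}(\Jtr)$. By \eqref{eq:enk} of Lemma~\ref{lem:split}, these equal $(-1)^{n}e_{n-2}(J)$ and $(-1)^{n-1}e_{n-3}(J)$ respectively. Then:
\begin{itemize}
\item the $\lambda^{0}$ coefficient of $-\chitr(\lambda)(b\lambda+2a)$ is $-2a(-1)^{n}e_{n-2}$;
\item the $\lambda^{1}$ coefficient is $-\bigl(b(-1)^{n}e_{n-2}+2a(-1)^{n-1}e_{n-3}\bigr)$.
\end{itemize}

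\emph{Comparison.} Equating and cancelling signs gives $D_{q_{0}}e_{n}=-2a\,e_{n-2}$ and $D_{q_{0}}e_{n-1}=b\,e_{n-2}-2a\,e_{n-3}$, which is \eqref{eq:two}. The case $n=2$ is covered by the conventions $e_{0}=1$ and $e_{-1}=0$ (compare Remark~\ref{rem:degrees}). The case $n=3$ needs a quick check that $e_{0}(\Jtr)=1$ agrees with $e_{0}(J)$, which it does.

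\emph{Solving.} Since $e_{n-2}\neq0$ by (iii) of $(\mathrm{BT}_{0})$, I solve the first equation for $a$. Substituting into the second gives $b=\bigl(D_{q_{0}}e_{n-1}+2a\,e_{n-3}\bigr)/e_{n-2}$, and inserting $2a=-D_{q_{0}}e_{n}/e_{n-2}$ yields \eqref{eq:mainbis}.

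\emph{Final sentence.} Under the additional hypotheses ($X\in C^{3}$ and $\Jtr$ hyperbolic), the last claim is a direct citation of Lemma~\ref{lem:identify}. That lemma identifies the $(a,b)$ of Definition~\ref{def:ab} with the normal-form coefficients.

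There is no real obstacle. The only care needed is the sign bookkeeping $(-1)^{n-2}=(-1)^{n}$ and $(-1)^{n-3}=(-1)^{n-1}$, together with the index shift between $e_{j}(\Jtr)$ and $e_{j}(J)$ supplied by \eqref{eq:enk}.
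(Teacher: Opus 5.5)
Your proposal is correct and follows the paper's proof essentially verbatim: you compare the $\lambda^{0}$ and $\lambda^{1}$ coefficients of the generating identity \eqref{eq:gen}, discard the $\lambda^{2}$-divisible transverse term, and read off the two lowest coefficients of $\chitr$ via \eqref{eq:enk}. You then solve for $a$ and $b$ using $e_{n-2}\neq0$, with the final claim cited from Lemma~\ref{lem:identify}, exactly as the paper does.
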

	
	\begin{proof}
		Compare the coefficients of $\lambda^{0}$ and $\lambda^{1}$ in
		\eqref{eq:gen}; the last term of \eqref{eq:gen} carries a factor
		$\lambda^{2}$ and contributes to neither.  Writing
		$\chitr(\lambda)=\sum_{j=0}^{n-2}(-1)^{j}e_{j}(\Jtr)\lambda^{n-2-j}$
		and using \eqref{eq:enk}, the coefficients of $\lambda^{0}$ and
		$\lambda^{1}$ in $\chitr$ are $(-1)^{n-2}e_{n-2}(\Jtr)=(-1)^{n}e_{n-2}$
		and $(-1)^{n-3}e_{n-3}(\Jtr)=(-1)^{n-1}e_{n-3}$.  Hence
		\[
		(-1)^{n}D_{q_{0}}e_{n}=-(-1)^{n}e_{n-2}\cdot2a,
		\qquad
		(-1)^{n-1}D_{q_{0}}e_{n-1}
		=-\bigl[(-1)^{n}e_{n-2}\,b+(-1)^{n-1}e_{n-3}\cdot2a\bigr],
		\]
		and dividing by $(-1)^{n}$ and $(-1)^{n-1}$ respectively gives
		\eqref{eq:two}.  Since $e_{n-2}\neq0$, \eqref{eq:two} can be solved for
		$a$ and then for $b$, giving \eqref{eq:mainbis}.  The signs depending
		on $n$ cancel identically.
	\end{proof}
	
	\begin{corollary}[The planar case]\label{cor:planar}
		For $n=2$, $e_{0}=1$ and $e_{-1}=0$, and \eqref{eq:mainbis} reduces to
		\eqref{eq:planar}.
	\end{corollary}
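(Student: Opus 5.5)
The plan is to specialise Theorem~\ref{thm:main} to $n=2$ and evaluate the conventions, using Remark~\ref{rem:degrees} as a cross-check. First I will record the values of the invariants for $n=2$. By \eqref{eq:chi}, $e_{0}=1$ for every matrix. The convention $e_{k}:=0$ for $k<0$ gives $e_{-1}=0$. This agrees with Lemma~\ref{lem:split}: there $e_{n-2}(J)=\dt\Jtr$ and $e_{n-3}(J)=e_{n-3}(\Jtr)$ with $\Jtr$ empty, and \eqref{eq:empty} gives $\dt\Jtr=1$ and $e_{-1}(\Jtr)=0$. Since $e_{0}=1\neq0$, hypothesis (iii) of $(\mathrm{BT}_{0})$ holds automatically, so Theorem~\ref{thm:main} applies.

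Next I will substitute these values into \eqref{eq:mainbis}. The first formula becomes $a=-\tfrac12 D_{q_{0}}e_{2}$. In the second, the correction term carries the factor $e_{-1}=0$, so it drops out and $b=D_{q_{0}}e_{1}$. Since $e_{2}(A)=\dt A$ and $e_{1}(A)=\tr A$ for $2\times 2$ matrices, these are exactly $a=-\tfrac12 D_{q_{0}}\dt DX(p)$ and $b=D_{q_{0}}\tr DX(p)$, which is \eqref{eq:planar}.

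As an independent check I will repeat the computation from Theorem~\ref{thm:gen} directly, as in Remark~\ref{rem:degrees}. With $\chitr\equiv1$ and an empty transverse term, \eqref{eq:gen} reads
\[
-\lambda D_{q_{0}}e_{1}+D_{q_{0}}e_{2}=-(b\lambda+2a).
\]
Here the $k=0$ term on the left vanishes because $D_{q_{0}}e_{0}=0$. Comparing coefficients of $\lambda^{1}$ and $\lambda^{0}$ gives the same two identities.

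The only step requiring any care is justifying that the block argument in Theorem~\ref{thm:gen} is legitimate when $\Etr=0$, so that all of $\R^{2}$ is the central block. This is exactly what Lemma~\ref{lem:blockadj} provides together with \eqref{eq:empty}. Beyond that there is no real obstacle; the corollary is a direct substitution.
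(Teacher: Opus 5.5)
Your proposal is correct and follows the paper's route: the corollary is just the substitution $e_{0}=1$, $e_{-1}=0$ into \eqref{eq:mainbis}, and your cross-check via \eqref{eq:gen} with $\chitr\equiv1$ is exactly Remark~\ref{rem:degrees}. One small wording point: (iii) holding automatically does not on its own make Theorem~\ref{thm:main} applicable, since (i) and (ii), i.e.\ $J$ nilpotent of rank one, are still part of the standing hypothesis for $n=2$.
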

	
	\section{Consequences}\label{sec:cons}
	
	\subsection{A coordinate-free nondegeneracy test}
	
	\begin{corollary}\label{cor:test}
		Let $X\in C^{3}$ with $X(p)=0$, $J=DX(p)$, and let $q_{0}$ be any
		nonzero element of $\ker J$.  Then $p$ is a nondegenerate BT
		singularity of $X$, meaning that $(\mathrm{BT}_{0})$ holds, $\Jtr$ is
		hyperbolic, and the restriction of $X$ to a centre manifold is
		smoothly equivalent to \eqref{eq:BT} with $ab\neq0$, if and only if
		\begin{enumerate}[label=(\alph*)]
			\item $\rank J=n-1$, $e_{n-1}(J)=0$, $e_{n-2}(J)\neq0$;
			\item all roots of $\chi_{J}(\lambda)/\lambda^{2}$ have nonzero real
			part;
			\item $D_{q_{0}}e_{n}\neq0$;
			\item $e_{n-2}\,D_{q_{0}}e_{n-1}\neq e_{n-3}\,D_{q_{0}}e_{n}$.
		\end{enumerate}
		Conditions (c) and (d) do not depend on the normalization of $q_{0}$.
	\end{corollary}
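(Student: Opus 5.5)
The argument assembles Lemma~\ref{lem:split}, Lemma~\ref{lem:identify} and Theorem~\ref{thm:main}, plus the standard Bogdanov--Takens result that a planar germ \eqref{eq:BT} with $ab\neq0$ is the nondegenerate case. We prove the two implications separately and then check the normalization claim.

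\emph{Conditions (a) and (b) versus the linear hypotheses.} Condition (a) is literally $(\mathrm{BT}_{0})$. Under $(\mathrm{BT}_{0})$, Lemma~\ref{lem:split} gives $\chi_{J}=\lambda^{2}\chitr$. Hence the roots of $\chi_{J}(\lambda)/\lambda^{2}$ are exactly the eigenvalues of $\Jtr$, counted with multiplicity, and (b) is hyperbolicity of $\Jtr$. So (a)$\wedge$(b) is equivalent to ``$(\mathrm{BT}_{0})$ holds and $\Jtr$ is hyperbolic''.

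\emph{Conditions (c) and (d) versus $ab\neq0$.} Assume (a) and (b). By Lemma~\ref{lem:identify}, since $X\in C^{3}$, the restriction of $X$ to a centre manifold has the form \eqref{eq:BT}, with $a,b$ the numbers of Definition~\ref{def:ab} attached to the chosen $q_{0}$. Theorem~\ref{thm:main} gives \eqref{eq:two}. Because $e_{n-2}\neq0$, we have $a=0$ iff $D_{q_{0}}e_{n}=0$; this yields (c)$\Leftrightarrow a\neq0$. Multiplying the formula for $b$ in \eqref{eq:mainbis} by $e_{n-2}^{2}\neq0$ gives
\[
e_{n-2}^{2}\,b=e_{n-2}D_{q_{0}}e_{n-1}-e_{n-3}D_{q_{0}}e_{n},
\]
so (d)$\Leftrightarrow b\neq0$. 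Thus (a)--(d) together are equivalent to nondegeneracy.

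\emph{The interpretive step.} The only point needing care is the meaning of ``smoothly equivalent to \eqref{eq:BT} with $ab\neq0$''. I would read it as saying that the quadratic normal-form coefficients produced by Lemma~\ref{lem:identify} satisfy $ab\neq0$. This needs to be independent of choices: of the centre manifold, of $q_{1}$, and of the normalization. Independence of the centre manifold holds because \eqref{eq:reduced} shows that the $2$-jet does not depend on $h$. Independence of $q_{1}$ is Lemma~\ref{lem:welldef}(a). Under rescaling $q_{0}\mapsto cq_{0}$, Lemma~\ref{lem:welldef}(b) and Remark~\ref{rem:scale} show that the vanishing of $a$ and of $b$ is invariant. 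One should also remark that any other planar normal form of \eqref{eq:BT} type for the same germ differs by a linear rescaling of $(u_{1},u_{2})$. Such a rescaling multiplies $a$ and $b$ by nonzero factors, so the condition $ab\neq0$ is intrinsic. I expect this to be the only delicate step, and I would handle it by citing the standard uniqueness of the quadratic BT normal form up to scaling.

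\emph{Independence of (c) and (d) from the normalization of $q_{0}$.} The derivative $D_{q_{0}}$ is linear in $q_{0}$, so replacing $q_{0}$ by $cq_{0}$ with $c\neq0$ multiplies $D_{q_{0}}e_{n}$ and $D_{q_{0}}e_{n-1}$ by $c$. The invariants $e_{n-2}$ and $e_{n-3}$ are evaluated at $p$ and do not depend on $q_{0}$. Hence both sides of (c) and the difference in (d) are multiplied by $c\neq0$, and their vanishing is unaffected.
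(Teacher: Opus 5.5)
Your proposal is correct and follows the paper's proof. As in the paper, (a) is $(\mathrm{BT}_{0})$, (b) is hyperbolicity of $\Jtr$ via $\chi_{J}=\lambda^{2}\chitr$, (c) and (d) are $a\neq0$ and $b\neq0$ after clearing the denominators $e_{n-2}$ and $e_{n-2}^{2}$ in \eqref{eq:mainbis}, and scale-invariance follows from degree-one homogeneity in $q_{0}$. Your extra paragraph on why $ab\neq0$ is intrinsic (independence of the centre manifold, of $q_{1}$, and of linear changes commuting with the nilpotent part) adds care the paper leaves implicit when it reads nondegeneracy through Lemma~\ref{lem:identify}; it is not a different route.
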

	
	\begin{proof}
		(a) is Definition~\ref{def:BT0}, (b) is hyperbolicity of $\Jtr$ by
		\eqref{eq:enk}, and (c), (d) are $a\neq0$ and $b\neq0$ after clearing
		denominators in \eqref{eq:mainbis}.  Both are homogeneous of degree one
		in $q_{0}$, so their nonvanishing is scale-invariant.
	\end{proof}
	
	\begin{remark}
		Corollary~\ref{cor:test} is a statement about the germ of a single
		vector field.  The genericity of a two-parameter \emph{family} through
		$p$ requires in addition the transversality of the unfolding
		\cite[Thm.~8.4]{Kuznetsov2004}, which is a condition on the parameter
		dependence and is not addressed here.
	\end{remark}
	
	Every ingredient of Corollary~\ref{cor:test} is a polynomial in the
	entries of the $2$-jet of $X$ at $p$, except for the one-dimensional
	kernel computation, so the test is directly amenable to symbolic
	elimination in the parameters.
	
	\subsection{Invariance under changes of phase-space coordinates}
	
	\begin{lemma}[Invariance of the first-order spectral jet along
		$\ker J$]\label{lem:inv}
		Let $X\in C^{2}$ with $X(p)=0$, let $q_{0}\in\ker DX(p)$, let
		$\varphi$ be a $C^{2}$ diffeomorphism with $\varphi(z_{0})=p$, put
		$Y:=(D\varphi)^{-1}(X\circ\varphi)$ and
		$q_{0}^{z}:=D\varphi(z_{0})^{-1}q_{0}$.  Then, for every
		$k=0,\dots,n$,
		\[
		D_{q_{0}^{z}}\,e_{k}\bigl(DY\bigr)(z_{0})
		=D_{q_{0}}\,e_{k}\bigl(DX\bigr)(p).
		\]
	\end{lemma}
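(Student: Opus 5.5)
The plan is to compare $DY$ along the line $z(s):=z_{0}+s\,q_{0}^{z}$ with a conjugate of $DX$ along the curve $\varphi(z(s))$. The correction term will be $o(s)$ because $q_{0}\in\ker DX(p)$. Conjugation invariance of the $e_{k}$ and the chain rule then finish the argument.

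\emph{Step 1: decomposition of $DY$.} Differentiate $Y(z)=D\varphi(z)^{-1}X(\varphi(z))$. Write $P(z):=D\varphi(z)$; since $\varphi\in C^{2}$, $z\mapsto P(z)^{-1}$ is $C^{1}$. This gives, for every $w$,
\[
DY(z)w=\bigl(D(P^{-1})(z)[w]\bigr)X(\varphi(z))+P(z)^{-1}DX(\varphi(z))P(z)w .
\]
So $DY(z)=R(z)+C(z)$, with $C(z):=P(z)^{-1}DX(\varphi(z))P(z)$. The remainder $R(z)$ is linear in $X(\varphi(z))$ with continuous coefficients. We only have continuity of $D(P^{-1})$, not differentiability, so $R$ cannot be differentiated. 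Instead we bound it directly.

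\emph{Step 2: the remainder is $o(s)$ along the kernel direction.} This is the key step, and the only place the hypothesis $q_{0}\in\ker DX(p)$ enters. Since $X\in C^{1}$ and $\varphi\in C^{1}$,
\[
X(\varphi(z(s)))=s\,DX(p)\,P(z_{0})\,q_{0}^{z}+o(s)=s\,Jq_{0}+o(s)=o(s).
\]
The coefficient $D(P^{-1})(z(s))$ is continuous, hence bounded near $s=0$. Therefore $\|R(z(s))\|=o(s)$. For a general direction this term would contribute at first order, which is why the lemma is restricted to $\ker J$.

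\emph{Step 3: conjugation invariance and the chain rule.} Each $e_{k}$ is a polynomial in the matrix entries, so it is locally Lipschitz. Hence
\[
e_{k}(DY(z(s)))=e_{k}(C(z(s)))+o(s).
\]
Since $e_{k}(P^{-1}AP)=e_{k}(A)$, this becomes
\[
e_{k}(DY(z(s)))=e_{k}\bigl(DX(\varphi(z(s)))\bigr)+o(s).
\]
Now $x\mapsto e_{k}(DX(x))$ is $C^{1}$ and $s\mapsto\varphi(z(s))$ is $C^{1}$ with velocity $P(z_{0})q_{0}^{z}=q_{0}$ at $s=0$. By the chain rule the right-hand side is differentiable at $s=0$ with derivative $D_{q_{0}}e_{k}(DX)(p)$. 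Consequently the left-hand side is differentiable at $0$ with the same derivative, which is the claim, for every $k=0,\dots,n$.

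The only delicate point is Step 2. Under bare $C^{2}$ regularity of $\varphi$, the term $R$ must be estimated as $o(s)$ rather than differentiated. The kernel condition is exactly what makes its first-order part $D(P^{-1})(z_{0})[\,\cdot\,]\,Jq_{0}$ vanish.
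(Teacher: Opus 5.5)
Your proof is correct and follows the paper's argument essentially step by step. It uses the same splitting of $DY$ into the conjugate $D\varphi^{-1}\,DX(\varphi)\,D\varphi$ plus a term linear in $X\circ\varphi$. It bounds that term by $C\,\|X(\varphi(z_{0}+s\,q_{0}^{z}))\|=o(s)$ using $q_{0}\in\ker DX(p)$, rather than differentiating it. It then concludes via local Lipschitz continuity of $e_{k}$, similarity invariance and the chain rule. You also make explicit that differentiability of $s\mapsto e_{k}(DY(z_{0}+s\,q_{0}^{z}))$ at $0$ follows from the $o(s)$ comparison, which the paper leaves implicit.
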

	
	\begin{proof}
		Write $DY(z)=M(z)+R(z)$ with
		$M(z)=D\varphi(z)^{-1}DX(\varphi(z))D\varphi(z)$ and
		$R(z)\zeta=\bigl[D_{\zeta}(D\varphi(z)^{-1})\bigr]X(\varphi(z))$.
		Since $M(z)$ is similar to $DX(\varphi(z))$ we have
		$e_{k}(M(z))=e_{k}(DX(\varphi(z)))$ for all $k$, and the chain rule
		together with $D\varphi(z_{0})q_{0}^{z}=q_{0}$ gives
		\[
		\left.\frac{d}{dt}\right|_{0}e_{k}\bigl(M(z_{0}+tq_{0}^{z})\bigr)
		=D_{q_{0}}e_{k}(DX)(p).
		\]
		It remains to show that $R$ contributes nothing.  We do \emph{not}
		differentiate $R$, which would require a third derivative of
		$\varphi$; we estimate it.  Since $\varphi\in C^{2}$, the map
		$z\mapsto D(D\varphi(z)^{-1})$ is continuous, hence bounded near
		$z_{0}$, so
		\[
		\|R(z_{0}+tq_{0}^{z})\|\le C\,\bigl\|X\bigl(\varphi(z_{0}+tq_{0}^{z})\bigr)\bigr\| .
		\]
		Now $\varphi(z_{0}+tq_{0}^{z})=p+tq_{0}+\mathcal O(t^{2})$, and since
		$X\in C^{1}$ with $X(p)=0$,
		\[
		X\bigl(\varphi(z_{0}+tq_{0}^{z})\bigr)
		=DX(p)\bigl(tq_{0}+\mathcal O(t^{2})\bigr)+o(t)
		=t\,DX(p)q_{0}+\mathcal O(t^{2})+o(t)=o(t),
		\]
		the linear term vanishing because $q_{0}\in\ker DX(p)$.  Hence
		$\|R(z_{0}+tq_{0}^{z})\|=o(t)$.  Finally $e_{k}$ is a polynomial map,
		so $e_{k}(M+R)-e_{k}(M)=\mathcal O(\|R\|)$ locally uniformly, and
		\[
		e_{k}\bigl(DY(z_{0}+tq_{0}^{z})\bigr)
		-e_{k}\bigl(M(z_{0}+tq_{0}^{z})\bigr)=o(t),
		\]
		so the two derivatives at $t=0$ coincide.
	\end{proof}
	
	The proof shows that the two hypotheses $X(p)=0$ and $q_{0}\in\ker DX(p)$
	play distinct and equally necessary roles: the first makes $R(z_{0})=0$,
	the second kills the linear term of $X\circ\varphi$ along $q_{0}^{z}$.
	Neither alone suffices.
	
	\begin{remark}[The kernel hypothesis is not cosmetic]\label{rem:necessary}
		Take $n=2$, $X(u,v)=(v+u^{2},\,3u^{2}-uv)$, which satisfies
		$(\mathrm{BT}_{0})$ at the origin with $\ker J=\langle(1,0)\rangle$,
		and $\varphi(u,v)=(u+uv,\,v+u^{2})$, a near-identity $C^{\infty}$
		diffeomorphism.  Then $D_{q}e_{2}$ at the origin equals $-6$ for
		$q=(1,0)\in\ker J$ both for $X$ and for
		$Y=(D\varphi)^{-1}(X\circ\varphi)$, whereas for $q=(0,1)\notin\ker J$
		it equals $1$ for $X$ and $3$ for $Y$.  Away from $\ker J$ the
		functions $x\mapsto e_{k}(DX(x))$ are simply not invariant.
	\end{remark}
	
	Together with Lemma~\ref{lem:welldef}(b), Lemma~\ref{lem:inv} shows that
	both sides of \eqref{eq:mainbis} transform in the same way, so
	\eqref{eq:mainbis} may be evaluated in whichever coordinates the model
	happens to be written.
	
	\subsection{Sharpness: which invariants are transversally clean}
	
	The generating identity determines the whole first-order jet, and shows
	where the transverse block enters.  Write
	$\chitr(\lambda)=\sum_{j=0}^{n-2}c_{j}\lambda^{j}$,
	\[
	\adj(\lambda I-\Jtr)=\sum_{j=0}^{\nu-1}A_{j}\lambda^{j},
	\qquad \nu:=n-2,
	\qquad
	t_{j}:=\tr(A_{j}\Ltr),
	\]
	with the conventions $c_{j}:=0$ for $j<0$ or $j>n-2$, and $A_{j}:=0$,
	hence $t_{j}:=0$, for $j<0$ or $j>\nu-1$.
	
	\begin{proposition}\label{prop:sharp}
		Under $(\mathrm{BT}_{0})$ with $n\ge3$ and $X\in C^{2}$, for
		$0\le\rho\le n-1$,
		\begin{equation}\label{eq:allcoeff}
			(-1)^{n-\rho}D_{q_{0}}e_{n-\rho}
			=-\bigl(2a\,c_{\rho}+b\,c_{\rho-1}\bigr)-t_{\rho-2},
		\end{equation}
		with the conventions above.  The cases $\rho=0,1$ are \eqref{eq:two}:
		$D_{q_{0}}e_{n}$ and $D_{q_{0}}e_{n-1}$ are determined by $J$ and
		$(a,b)$ alone.  The case $\rho=n-1$ is the explicit identity
		\begin{equation}\label{eq:trace}
			D_{q_{0}}e_{1}=b+\tr\Ltr .
		\end{equation}
		Moreover $\Ltr$ is an arbitrary endomorphism of $\Etr$: given $J$ and
		any linear $L$ there is a $C^{\infty}$ vector field $X$ with
		$DX(p)=J$ and $D(DX)_{p}[q_{0}]=L$.
	\end{proposition}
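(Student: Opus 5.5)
The plan is to read off every coefficient of the generating identity \eqref{eq:gen} of Theorem~\ref{thm:gen}, and then to build a field realizing a prescribed $L$.

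\emph{Coefficient extraction.} The left side of \eqref{eq:gen}, written with the exponent $\rho:=n-k$, is $\sum_{\rho}(-1)^{n-\rho}D_{q_{0}}e_{n-\rho}\lambda^{\rho}$. On the right:
\begin{itemize}[label=$\cdot$]
\item $-\chitr(\lambda)(b\lambda+2a)=-\sum_{\rho}(2a\,c_{\rho}+b\,c_{\rho-1})\lambda^{\rho}$, which follows by expanding $\chitr=\sum_{j}c_{j}\lambda^{j}$;
\item $-\lambda^{2}\tr(\adj(\lambda I-\Jtr)\Ltr)=-\sum_{j}t_{j}\lambda^{j+2}$, which follows from linearity of the trace and the expansion of the adjugate; its coefficient of $\lambda^{\rho}$ is $-t_{\rho-2}$.
\end{itemize}
Matching the coefficients of $\lambda^{\rho}$ for $0\le\rho\le n-1$ gives \eqref{eq:allcoeff}. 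The conventions $c_{j}=0$ and $t_{j}=0$ outside their ranges make the formula valid at the endpoints.

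\emph{The endpoint cases.} For $\rho=0,1$ we have $t_{-2}=t_{-1}=0$, and \eqref{eq:allcoeff} reduces to the computation already done in Theorem~\ref{thm:main}.

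For $\rho=n-1$, the relevant data are:
\begin{itemize}[label=$\cdot$]
\item $c_{n-1}=0$;
\item $c_{n-2}=1$, since $\chitr$ is monic of degree $n-2$;
\item $A_{n-3}=A_{\nu-1}=I$, the leading coefficient of $\adj(\lambda I-\Jtr)$. This holds because $\adj(\lambda I-M)=\lambda^{m-1}I+\dots$ for an $m\times m$ matrix $M$, as seen from the entries: the diagonal cofactors are monic of degree $m-1$ and the off-diagonal ones have lower degree.
\end{itemize}
Hence $t_{n-3}=\tr\Ltr$. The identity becomes $(-1)D_{q_{0}}e_{1}=-b-\tr\Ltr$, which is \eqref{eq:trace}.

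\emph{Realizability.} Given $J$ with the standing structure, $q_{0}\in\ker J$, and any $L\in\R^{n\times n}$, I need a field with $DX(p)=J$ and $D(DX)_{p}[q_{0}]=L$, where $D(DX)_{p}[q_{0}]w=B(q_{0},w)$ for a symmetric bilinear $B$. I take the polynomial field $X(p+x)=Jx+\tfrac12B(x,x)$ and construct a symmetric $B$ with $B(q_{0},w)=Lw$ for all $w$.

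To construct $B$, choose a complement $W$ with $\R^{n}=\langle q_{0}\rangle\oplus W$, so that $x=sq_{0}+w$. Define
\[
B(x,x')=ss'Lq_{0}+s\,Lw'+s'\,Lw .
\]
This form is bilinear and symmetric, and it satisfies $B(q_{0},x')=s'Lq_{0}+Lw'=Lx'$. Then $D(DX)_{p}[q_{0}]=L$. In particular the block $\Ltr=\Pi_{\mathrm{tr}}L|_{\Etr}$ can be any endomorphism of $\Etr$, since $L$ can be chosen to be that endomorphism extended by zero.

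The only delicate point is this realizability step, namely checking that the prescribed derivative is compatible with the symmetry of $D^{2}X$. The explicit formula for $B$ resolves it, because the only constraint symmetry imposes, $B(q_{0},q_{0})=Lq_{0}$, is automatically consistent.
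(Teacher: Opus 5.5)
Your proposal is correct and follows the paper's proof: \eqref{eq:allcoeff} is read off as the coefficient of $\lambda^{\rho}$ in \eqref{eq:gen}, and your complement-based form $B$ with $B(q_{0},\cdot)=L$ is the paper's quadratic field with $q_{0}=\mathrm{e}_{1}$, written without coordinates. The only difference is that you obtain \eqref{eq:trace} from the $\rho=n-1$ coefficient, using $c_{n-2}=1$ and $A_{\nu-1}=I$. The paper instead argues directly that $D_{q_{0}}\tr DX=\tr L=\tr L_{c}+\tr\Ltr$ with $\tr L_{c}=b$; both arguments are valid.
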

	
	\begin{proof}
		\eqref{eq:allcoeff} is the extraction of the coefficient of
		$\lambda^{\rho}$ in \eqref{eq:gen}.  For \eqref{eq:trace} it is quicker
		to argue directly:
		$D_{q_{0}}e_{1}=D_{q_{0}}\tr DX=\tr L=\tr L_{c}+\tr\Ltr$, and
		$\tr L_{c}=b$ by \eqref{eq:step3}.  Realizability of $L$: choose
		coordinates with $q_{0}=\mathrm{e}_{1}$ and let $X$ be the quadratic
		field $X(x)=J(x-p)+\tfrac12 B(x-p,x-p)$ with $B$ the symmetric
		bilinear form defined by $B(\mathrm{e}_{1},\mathrm{e}_{j})
		=B(\mathrm{e}_{j},\mathrm{e}_{1}):=L\mathrm{e}_{j}$ and all remaining
		entries zero; then $DX(p)=J$ and $D(DX)_{p}[q_{0}]=L$.
	\end{proof}
	
	It remains to decide for which $\rho$ the last term of \eqref{eq:allcoeff}
	is a nonzero linear form in $\Ltr$, that is, for which $j$ the
	coefficient $A_{j}$ of the adjugate is a nonzero matrix.  Two of them
	are immediate: $A_{\nu-1}=I$, and $A_{0}=\adj(-\Jtr)$ is invertible
	because $\Jtr$ is.  The remaining ones turn out to follow from an
	elementary statement about gaps in the coefficient sequence of a
	polynomial, which we prove for want of a reference.
	
	\begin{lemma}[Gap lemma]\label{lem:gap}
		Let $K$ be a field of characteristic zero and let
		$v=\sum_{j=0}^{m}a_{j}\lambda^{j}\in K[\lambda]$ have $a_{m}\neq0$
		and $a_{0}\neq0$.  Let $\varepsilon$ be the number of distinct roots
		of $v$ in an algebraic closure of $K$.  If
		\[
		a_{r}=a_{r+1}=\dots=a_{r+g-1}=0
		\qquad\text{for some } r\ge1 \text{ with } r+g-1\le m-1,
		\]
		then $g\le\varepsilon-1$.  Equivalently, a gap of $g$ consecutive
		vanishing coefficients strictly between the two extreme ones forces
		at least $g+1$ distinct roots.  The bound is attained by
		$\lambda^{m}-\gamma$, $\gamma\neq0$.
	\end{lemma}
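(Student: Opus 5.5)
The plan is to compare $v$ with its radical through the logarithmic derivative, in the spirit of a Wronskian argument. Work over an algebraic closure $\bar K$. Write $v=a_{m}\prod_{i=1}^{\varepsilon}(\lambda-\alpha_{i})^{m_{i}}$ with distinct $\alpha_{i}$, all nonzero because $a_{0}\neq0$. Put $R:=\prod_{i}(\lambda-\alpha_{i})$, so $\deg R=\varepsilon$, and $S:=\sum_{i}m_{i}\prod_{j\neq i}(\lambda-\alpha_{j})$, so $\deg S\le\varepsilon-1$. Then $v'/v=S/R$, that is,
\[
R\,v'=S\,v .
\]

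Next, split $v$ along the gap. Set $N:=r+g$ and write $v=P+\lambda^{N}Q$, where $P:=\sum_{j=0}^{r-1}a_{j}\lambda^{j}$ and $Q:=\sum_{j\ge N}a_{j}\lambda^{j-N}$. The hypotheses give $P\neq0$ (since $a_{0}\neq0$) and $\deg P\le r-1<m$. Substituting into $Rv'=Sv$ and using $(\lambda^{N}Q)'=\lambda^{N-1}(NQ+\lambda Q')$, I obtain
\[
R\,P'-S\,P=-\lambda^{N-1}\bigl(R(NQ+\lambda Q')-\lambda SQ\bigr).
\]
The left side has degree at most $\varepsilon+r-2$, since $\deg RP'\le\varepsilon+r-2$ and $\deg SP\le\varepsilon-1+r-1$. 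The right side is divisible by $\lambda^{N-1}$.

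I then split into two cases.

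\emph{Case 1: the left side is nonzero.} Divisibility by $\lambda^{N-1}$ forces $N-1\le\varepsilon+r-2$, i.e. $g\le\varepsilon-1$, which is the claim.

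\emph{Case 2: the left side vanishes.} Then $P'/P=S/R=v'/v$, so $(P/v)'=0$ in $\bar K(\lambda)$. In characteristic zero a rational function with zero derivative is constant, so $P=cv$ for some $c\in\bar K$. Since $\deg P<m=\deg v$, this gives $c=0$ and hence $P=0$, contradicting $a_{0}\neq0$. This is the only place characteristic zero is used, and it is the step I expect to need the most care: I must check that the constancy of $P/v$ is justified, for instance by writing $P/v$ in lowest terms and noting that its numerator and denominator must then have zero derivative, hence be constants.

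For sharpness, $\lambda^{m}-\gamma$ with $\gamma\neq0$ has the gap $a_{1}=\dots=a_{m-1}=0$, so $g=m-1$. Its derivative $m\lambda^{m-1}$ is nonzero in characteristic zero and shares no root with it, so it has $m=g+1$ distinct roots and the bound is attained.
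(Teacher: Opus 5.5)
Your proof is correct. It rests on the same identity as the paper's proof, namely $\pi v'=Nv$ with $\pi$ the radical of $v$ (your $Rv'=Sv$), but it uses the gap differently.

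The paper compares coefficients of $\lambda^{k+\varepsilon-1}$ to get a linear recurrence of order $\varepsilon$ for the $a_j$, assumes $g\ge\varepsilon$, and descends. At $k=r-1$ the only surviving term is $(r-1-m)a_{r-1}$, so $a_{r-1}=0$, and iterating reaches $a_0=0$. You instead split $v=P+\lambda^{N}Q$ at the gap and compare the degree bound $\varepsilon+r-2$ of $RP'-SP$ with its $\lambda$-adic order $\ge N-1$. This gives $g\le\varepsilon-1$ directly unless $RP'=SP$, and you then exclude that case.

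The two are packagings of one mechanism. When $g\ge\varepsilon$, the paper's recurrences for $k\le r-1$ are exactly the coefficients of $RP'-SP$, read from the top down. Your version avoids the index bookkeeping and isolates the single use of characteristic zero. The paper's version produces an explicit recurrence, which shows concretely how a window of $\varepsilon$ zeros propagates downward and is easy to check mechanically.

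Your Case 2 can be shortened. $R$ is monic and $S$ has leading coefficient $\sum_i m_i=m$, so for $d=\deg P<m$ the coefficient of $\lambda^{d+\varepsilon-1}$ in $RP'-SP$ is $(d-m)a_d\neq0$. This is precisely one step of the paper's descent, and it makes the appeal to constancy of rational functions with zero derivative unnecessary, though your justification of that fact is also fine.
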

	
	\begin{proof}
		We may work over an algebraic closure.  Write
		$v=\gamma\prod_{i=1}^{\varepsilon}(\lambda-\alpha_{i})^{\kappa_{i}}$
		with the $\alpha_{i}$ distinct, $\sum_{i}\kappa_{i}=m$, and
		$\alpha_{i}\neq0$ for
		all $i$ because $a_{0}\neq0$.  Put
		\[
		\pi:=\prod_{i=1}^{\varepsilon}(\lambda-\alpha_{i})
		=\sum_{i=0}^{\varepsilon}\sigma_{i}\lambda^{\varepsilon-i},
		\qquad
		N:=\sum_{i=1}^{\varepsilon}\kappa_{i}\,\frac{\pi}{\lambda-\alpha_{i}}
		=\sum_{i=0}^{\varepsilon-1}N_{i}\lambda^{\varepsilon-1-i},
		\]
		so that $\sigma_{0}=1$ and $N_{0}=\sum_{i}\kappa_{i}=m$.  From
		$v'/v=\sum_{i}\kappa_{i}/(\lambda-\alpha_{i})$ we obtain the polynomial
		identity $\pi\,v'=N\,v$, and comparing the coefficients of
		$\lambda^{k+\varepsilon-1}$ on the two sides gives, for every
		$k\in\mathbb Z$ and with the convention $a_{j}:=0$ for $j<0$ or
		$j>m$,
		\begin{equation}\label{eq:recur}
			\sigma_{\varepsilon}(k+\varepsilon)\,a_{k+\varepsilon}
			+\sum_{i=0}^{\varepsilon-1}
			\bigl(\sigma_{i}(k+i)-N_{i}\bigr)a_{k+i}=0 .
		\end{equation}
		Suppose now $a_{r}=\dots=a_{r+g-1}=0$ as in the statement, and assume
		$g\ge\varepsilon$; in particular $a_{r}=\dots=a_{r+\varepsilon-1}=0$.
		Apply \eqref{eq:recur} with $k=r-1$.  The first term involves
		$a_{r+\varepsilon-1}=0$; the terms $i=1,\dots,\varepsilon-1$ involve
		$a_{r},\dots,a_{r+\varepsilon-2}$, all zero; and the term $i=0$ is
		$\bigl(\sigma_{0}(r-1)-N_{0}\bigr)a_{r-1}=(r-1-m)\,a_{r-1}$.  Since
		$r-1<m$ and $\operatorname{char}K=0$, we conclude $a_{r-1}=0$.  Thus
		$a_{r-1},\dots,a_{r+\varepsilon-2}$ is again a block of $\varepsilon$
		consecutive zeros, one step lower.  Iterating $r$ times yields
		$a_{0}=0$, contradicting $a_{0}\neq0$.  Hence $g\le\varepsilon-1$.
		For $v=\lambda^{m}-\gamma$ one has $\varepsilon=m$ and $g=m-1$.
	\end{proof}
	
	\begin{theorem}[Sharpness]\label{thm:sharp}
		Let $A$ be an invertible $\nu\times\nu$ matrix over a field of
		characteristic zero and write
		$\adj(\lambda I-A)=\sum_{j=0}^{\nu-1}A_{j}\lambda^{j}$.  Then
		$A_{j}\neq0$ for every $0\le j\le\nu-1$.  Consequently, under
		$(\mathrm{BT}_{0})$ with $n\ge3$, the map $\Ltr\mapsto t_{\rho-2}$ is
		a nonzero linear form for every $2\le\rho\le n-1$, and therefore
		\emph{no} $D_{q_{0}}e_{k}$ with $k\le n-2$ is determined by $J$ and
		the pair $(a,b)$: the two invariants $e_{n-1}$ and $e_{n}$ of
		Theorem~\ref{thm:main} are the only transversally clean ones.
	\end{theorem}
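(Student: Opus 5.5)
The plan is to prove the matrix statement first and then read off the consequences from Proposition~\ref{prop:sharp}.

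\emph{Step 1: the coefficients $A_{j}$ as polynomials in $A$.} Write $\chi_{A}(\lambda)=\sum_{i=0}^{\nu}c_{i}\lambda^{i}$ with $c_{\nu}=1$ and $c_{0}=(-1)^{\nu}\dt A\neq0$. Comparing coefficients of $\lambda^{j}$ in $(\lambda I-A)\adj(\lambda I-A)=\chi_{A}(\lambda)I$ gives the recursion $A_{j-1}-AA_{j}=c_{j}I$, with $A_{\nu-1}=I$. Solving it downward gives
\[
A_{j}=q_{j}(A),\qquad q_{j}(x):=\sum_{i\ge0}c_{j+1+i}\,x^{i},
\]
a monic polynomial of degree $\nu-1-j$. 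Solving it upward from $A_{-1}=0$ gives $AA_{j}=-r_{j}(A)$, where $r_{j}(x):=\sum_{i\le j}c_{i}x^{i}$. This is the Cayley--Hamilton splitting $\chi_{A}=\lambda^{j+1}q_{j}+r_{j}$. The extreme cases are immediate: $A_{\nu-1}=I$, and $A_{0}=\adj(-A)$ is invertible.

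\emph{Step 2: reduce $A_{j}=0$ to a divisibility statement.} Suppose $A_{j}=0$ for some $1\le j\le\nu-2$, and let $m$ be the minimal polynomial of $A$. Then $m\mid q_{j}$. Since $AA_{j}=0$ and $A$ is invertible, $r_{j}(A)=0$ as well, so $m\mid r_{j}$. Both $q_{j}$ and $r_{j}$ are nonzero: $q_{j}$ is monic, and $r_{j}$ has constant term $c_{0}\neq0$. Every root of $\chi_{A}$ is a root of $m$, so the number $\varepsilon$ of distinct eigenvalues satisfies
\[
\varepsilon\le\deg m\le\min(j,\ \nu-1-j).
\]

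\emph{Step 3: the contradiction, which is the main obstacle.} I need a polynomial with nonzero extreme coefficients, a gap of at least $\varepsilon$ interior zero coefficients, and at most $\varepsilon$ distinct roots; Lemma~\ref{lem:gap} then rules it out. The natural candidate comes from the two relations $q_{j}(A)=0$ and $r_{j}(A)=0$. Multiply the first by $A^{j+1}$ and take a suitable linear combination with the second so as to clear an interior block of coefficients. This produces a polynomial $v$ with $v(A)=0$. Hence $m\mid v$, and in good cases the roots of $v$ are exactly the eigenvalues of $A$. The zero block of $v$ should have length at least $\min(j,\nu-1-j)\ge\varepsilon$.

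I expect the real work to be in making $v$ have no roots outside $\spec A$, because the gap lemma counts all distinct roots of $v$. If the construction does not close directly, the fallback is to run the recursion~\eqref{eq:recur} from the proof of Lemma~\ref{lem:gap} on $\chi_{A}$ itself, using $\pi=\prod(\lambda-\alpha_{i})$ over $\spec A$. The divisibilities $m\mid q_{j}$ and $m\mid r_{j}$ then say that $\pi$ divides both pieces of the splitting of $\chi_{A}$. I would then push the recursion down to $c_{0}$ as in the lemma, using characteristic zero at each step, to force $c_{0}=0$, which contradicts invertibility.

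\emph{Step 4: consequences.} Apply the matrix statement with $A=\Jtr$ and $\nu=n-2$. For each $0\le j\le\nu-1$, the map $\Ltr\mapsto\tr(A_{j}\Ltr)$ is a nonzero linear form, since the trace pairing is nondegenerate and $A_{j}\neq0$. By the realizability part of Proposition~\ref{prop:sharp}, $\Ltr$ can be varied freely while $J$ and $L_{c}$, and hence $(a,b)$, stay fixed. Therefore in~\eqref{eq:allcoeff} the term $t_{\rho-2}$ genuinely varies for every $2\le\rho\le n-1$. So $D_{q_{0}}e_{n-\rho}$ with $n-\rho\le n-2$ is not a function of $J$ and $(a,b)$, while $\rho=0,1$ are clean by Theorem~\ref{thm:main}.
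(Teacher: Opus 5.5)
Your Steps 1, 2 and 4 match the paper's argument: with $k=\nu-1-j$, your $q_j,r_j$ are the paper's $P_k,R_k$. One small slip in Step 1: the upward recursion gives $A^{j+1}A_j=-r_j(A)$, not $AA_j=-r_j(A)$. This is harmless, since $A_j=0$ still forces $r_j(A)=0$.

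The genuine gap is Step~3, which is the heart of the proof and is left unresolved. A scalar combination $\alpha\lambda^{j+1}q_j+\beta r_j$ only rescales the upper and lower halves of the coefficient sequence of $\chi_A$, so it creates no zero block. Polynomial multipliers do give an annihilating polynomial, but then you lose control of its roots, which is exactly the difficulty you flag.

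Your fallback fails for a more basic reason. The recursion \eqref{eq:recur} only propagates an \emph{existing} block of $\varepsilon$ consecutive zero coefficients downward. In general $\chi_A$ has no zero coefficients at all: for $A=\operatorname{diag}(1,2,3)$ one gets $\chi_A=\lambda^3-6\lambda^2+11\lambda-6$. So there is nothing to start from.

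The missing idea is to divide rather than combine. Let $D=\deg m$, write $q_j=m\,u$ and $r_j=m\,w$, and set
\[
v:=\chi_A/m=\lambda^{j+1}u+w .
\]
Since $\deg w\le j-D$ and every term of $\lambda^{j+1}u$ has degree at least $j+1$, the coefficients of $v$ in degrees $j+1-D,\dots,j$ vanish.

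\begin{itemize}
\item This block has length $D$, not $\min(j,\nu-1-j)$ as you anticipated, but $D$ is what is needed.
\item It lies strictly inside the coefficient sequence because $D\le j$ and $D\le\nu-1-j$.
\item Because $v$ divides $\chi_A$, its roots are eigenvalues of $A$. Hence the number $\varepsilon(v)$ of its distinct roots satisfies $\varepsilon(v)\le\varepsilon\le D$ automatically, which removes your worry about spurious roots.
\item $v$ is monic and $v(0)=\chi_A(0)/m(0)\neq0$.
\end{itemize}

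The gap lemma then gives $D\le\varepsilon(v)-1\le D-1$, a contradiction. This is exactly how the paper closes the argument.

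Dividing by the radical $\pi=\prod(\lambda-\alpha_i)$ instead of $m$ works equally well and gives a block of length $\varepsilon$. In that form your fallback becomes correct, provided the recursion is run on $\chi_A/\pi$ rather than on $\chi_A$ itself.
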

	
	\begin{proof}
		Write $\adj(\lambda I-A)=\sum_{k=0}^{\nu-1}B_{k}\lambda^{\nu-1-k}$,
		so $A_{j}=B_{\nu-1-j}$.  Expanding
		$\adj(I-\zeta A)=\dt(I-\zeta A)(I-\zeta A)^{-1}$ in powers of
		$\zeta$ gives
		$B_{k}=P_{k}(A)$, where $P_{k}$ is the truncation of
		$\chi_{A}$ formed by its $k+1$ leading coefficients; thus $P_{k}$ is
		monic of degree $k$, and $\chi_{A}=P_{k}\lambda^{\nu-k}+R_{k}$ with
		$\deg R_{k}\le\nu-k-1$ and
		$R_{k}(0)=\chi_{A}(0)=(-1)^{\nu}\dt A\neq0$.
		
		Assume $B_{k}=0$ for some $k$, and let $\pi$ be the minimal
		polynomial of $A$, of degree $D\ge1$.  Then $\pi\mid P_{k}$, so
		$D\le k$; and $R_{k}(A)=\chi_{A}(A)-P_{k}(A)A^{\nu-k}=0$ with
		$R_{k}\neq0$, so $\pi\mid R_{k}$ and $D\le\nu-k-1$.  Write
		$P_{k}=\pi\,u$, $R_{k}=\pi\,w$ and $\chi_{A}=\pi\,v$.  Dividing
		$\chi_{A}=P_{k}\lambda^{\nu-k}+R_{k}$ by $\pi$ gives
		\[
		v=u\,\lambda^{\nu-k}+w,\qquad \deg w\le\nu-k-1-D,
		\]
		so the coefficients of $v$ in the degrees
		$\nu-k-D,\dots,\nu-k-1$ all vanish.  This is a gap of length $D$
		lying strictly inside the coefficient sequence of $v$: its lowest
		degree is $\nu-k-D\ge1$ because $D\le\nu-k-1$, and its highest is
		$\nu-k-1\le\deg v-1=\nu-D-1$ because $D\le k$.  Moreover
		$v(0)=\chi_{A}(0)/\pi(0)\neq0$, and $\deg v=\nu-D\ge k+1\ge D+1\ge2$.
		Finally the number $\varepsilon$ of distinct roots of $v$ is at most
		the number of distinct eigenvalues of $A$, which is at most
		$D=\deg\pi$.  The gap lemma now gives $D\le\varepsilon-1\le D-1$, a
		contradiction.  Hence $B_{k}\neq0$ for every $k$.
		
		For the consequence, apply this with $A=\Jtr$, which is invertible by
		Lemma~\ref{lem:split}: every $A_{j}$ is a nonzero matrix, and the
		trace pairing being nondegenerate, $\Ltr\mapsto\tr(A_{j}\Ltr)$ is a
		nonzero linear form.  Since $\Ltr$ may be prescribed arbitrarily by
		Proposition~\ref{prop:sharp}, the last term of \eqref{eq:allcoeff}
		varies independently of $J$ and $(a,b)$ whenever $\rho\ge2$.
	\end{proof}
	
	\begin{remark}
		Lemma~\ref{lem:gap} is a companion to the classical bound of Haj\'os,
		which states that over a field of characteristic zero the number of
		nonzero coefficients of a polynomial exceeds the multiplicity of any
		of its nonzero roots.  The two statements exchange the roles of the
		sparsity pattern and of the root structure, and neither seems to
		follow formally from the other.  For $\varepsilon=1$,
		Lemma~\ref{lem:gap} recovers the fact that
		$P_{k}(\alpha)=(-1)^{k}\binom{\nu-1}{k}\alpha^{k}\neq0$ when
		$A=\alpha I+N$ with $N$ nilpotent and $\alpha\neq0$, which is the
		case where the theorem can also be checked in closed form.
	\end{remark}
	
	Identity \eqref{eq:trace} is the exact form of what is often described
	informally as hyperbolic contamination: the trace of the full Jacobian
	does vary along $\ker J$ at the rate $b$, but with an additive error
	equal to the variation of the trace of the transverse block.  It is the
	vanishing of the two lowest coefficients of $\chi_{J}$, equivalently the
	factor $\lambda^{2}$ multiplying the transverse term in \eqref{eq:gen},
	that makes $e_{n-1}$ and $e_{n}$, and only those, clean.
	
	\subsection{Relation with the spectral-factorization argument}
	
	If the spectrum of $J$ splits into $\{0,0\}$ and the transverse part, one
	may factor
	$\chi_{DX(x)}(\lambda)=P_{c}(\lambda,x)P_{\mathrm{tr}}(\lambda,x)$ with
	$P_{c}=\lambda^{2}-\tau_{c}(x)\lambda+\delta_{c}(x)$ near $p$, obtaining
	$e_{n}=\delta_{c}e^{\mathrm{tr}}_{n-2}$ and
	$e_{n-1}=\tau_{c}e^{\mathrm{tr}}_{n-2}+\delta_{c}e^{\mathrm{tr}}_{n-3}$;
	differentiating along $q_{0}$ and using
	$\tau_{c}(p)=\delta_{c}(p)=0$ yields \eqref{eq:mainbis} once one knows
	$a=-\tfrac12D_{q_{0}}\delta_{c}$ and $b=D_{q_{0}}\tau_{c}$.  This route is
	correct, but it costs three extra ingredients: the smoothness of the
	factorization; the identification of $P_{c}$ restricted to a centre
	manifold with the characteristic polynomial of the reduced Jacobian; and
	the identification of $D_{q_{0}}\tau_{c}$, $D_{q_{0}}\delta_{c}$ with
	$b$, $-2a$, which needs a normal-form transformation and hence
	Lemma~\ref{lem:inv}.  Each imports hypotheses that
	Theorem~\ref{thm:main} does not need.
	
	\begin{remark}[Beyond hyperbolicity]\label{rem:strength}
		Theorem~\ref{thm:main} holds verbatim when $\Jtr$ has purely imaginary
		eigenvalues, or eigenvalues with $\mu_{i}+\mu_{j}=0$, situations in
		which no invariant two-dimensional manifold tangent to $\Ec$ need
		exist and the perturbative argument breaks down.  In such cases
		\eqref{eq:mainbis} is still an identity between the $2$-jet quantities
		\eqref{eq:abKuz} and the spectral data, and \eqref{eq:abKuz} still
		computes the quadratic part of $\Pi_{c}X$ restricted to $\Ec$; what is
		lost is only the reading of $a,b$ as coefficients of a two-dimensional
		germ, i.e.\ Lemma~\ref{lem:identify}.  See Example~\ref{ex:nonhyp}.
	\end{remark}
	
	\section{Geometric reading of the two identities}\label{sec:geom}
	
	\begin{figure}[!t]
		\centering
		\begin{tikzpicture}[
			scale=0.92, transform shape,
			font=\footnotesize,
			panel/.style={draw=#1, rounded corners=2.5pt, line width=0.7pt,
				fill=white},
			ptitle/.style={font=\small\bfseries, anchor=north west, inner sep=0pt},
			tag/.style={draw=#1, text=#1, fill=white, rounded corners=1.5pt,
				line width=0.4pt, inner sep=2.5pt, font=\scriptsize}
			]
			
			\begin{scope}
				\fill[black!5] (-0.646,-0.927) -- (-2.756,-2.458) -- (0.646,-2.799) -- (2.756,-1.268) -- (2.756,-1.268) -- cycle;
				\fill[black!14,fill opacity=0.6] (2.315,-1.307) -- (2.149,-1.397) -- (1.988,-1.488) -- (1.832,-1.579) -- (1.682,-1.671) -- (1.537,-1.763) -- (1.398,-1.856) -- (1.264,-1.949) -- (1.135,-2.043) -- (1.012,-2.138) -- (0.894,-2.233) -- (0.781,-2.328) -- (0.674,-2.425) -- (0.572,-2.521) -- (0.476,-2.618) -- (-0.466,-2.604) -- (-0.408,-2.503) -- (-0.350,-2.402) -- (-0.290,-2.301) -- (-0.229,-2.201) -- (-0.165,-2.100) -- (-0.098,-2.000) -- (-0.028,-1.900) -- (0.047,-1.801) -- (0.128,-1.702) -- (0.214,-1.604) -- (0.307,-1.506) -- (0.408,-1.410) -- (0.518,-1.314) -- (0.636,-1.219) -- (0.731,-1.148) -- cycle;
				\fill[black!14,fill opacity=0.6] (0.731,-1.148) -- (0.606,-1.242) -- (0.489,-1.338) -- (0.382,-1.434) -- (0.283,-1.531) -- (0.192,-1.628) -- (0.107,-1.727) -- (0.028,-1.826) -- (-0.046,-1.925) -- (-0.115,-2.025) -- (-0.181,-2.125) -- (-0.244,-2.226) -- (-0.305,-2.327) -- (-0.365,-2.428) -- (-0.423,-2.529) -- (-1.375,-2.513) -- (-1.283,-2.416) -- (-1.185,-2.319) -- (-1.082,-2.222) -- (-0.973,-2.126) -- (-0.859,-2.031) -- (-0.740,-1.936) -- (-0.615,-1.842) -- (-0.485,-1.748) -- (-0.350,-1.654) -- (-0.209,-1.562) -- (-0.063,-1.469) -- (0.089,-1.378) -- (0.246,-1.287) -- (0.408,-1.196) -- (0.533,-1.128) -- cycle;
				\fill[black!14,fill opacity=0.6] (-0.585,-1.017) -- (-0.714,-1.110) -- (-0.844,-1.204) -- (-0.973,-1.298) -- (-1.102,-1.392) -- (-1.232,-1.486) -- (-1.361,-1.580) -- (-1.490,-1.674) -- (-1.620,-1.768) -- (-1.749,-1.862) -- (-1.879,-1.956) -- (-2.008,-2.050) -- (-2.137,-2.143) -- (-2.267,-2.237) -- (-2.396,-2.331) -- (-0.466,-2.604) -- (-0.408,-2.503) -- (-0.350,-2.402) -- (-0.290,-2.301) -- (-0.229,-2.201) -- (-0.165,-2.100) -- (-0.098,-2.000) -- (-0.028,-1.900) -- (0.047,-1.801) -- (0.128,-1.702) -- (0.214,-1.604) -- (0.307,-1.506) -- (0.408,-1.410) -- (0.518,-1.314) -- (0.636,-1.219) -- (0.731,-1.148) -- cycle;
				\fill[black!14,fill opacity=0.6] (0.731,-1.148) -- (0.606,-1.242) -- (0.489,-1.338) -- (0.382,-1.434) -- (0.283,-1.531) -- (0.192,-1.628) -- (0.107,-1.727) -- (0.028,-1.826) -- (-0.046,-1.925) -- (-0.115,-2.025) -- (-0.181,-2.125) -- (-0.244,-2.226) -- (-0.305,-2.327) -- (-0.365,-2.428) -- (-0.423,-2.529) -- (0.585,-2.709) -- (0.714,-2.616) -- (0.844,-2.522) -- (0.973,-2.428) -- (1.102,-2.334) -- (1.232,-2.240) -- (1.361,-2.146) -- (1.490,-2.052) -- (1.620,-1.958) -- (1.749,-1.864) -- (1.879,-1.770) -- (2.008,-1.676) -- (2.137,-1.583) -- (2.267,-1.489) -- (2.396,-1.395) -- (2.493,-1.324) -- cycle;
				\draw[black!30,line width=0.5pt] (-1.788,-2.346) -- (-1.156,-2.175) -- (-0.524,-2.004) -- (0.108,-1.834) -- (0.740,-1.663) -- (1.372,-1.492) -- (1.973,-1.330);
				\draw[black!25,dashed,line width=0.3pt] (0.000,0.000) -- (0.000,-1.863);
				\draw[-{Stealth[length=4pt]},line width=.6pt] (-0.708,-0.838) -- (3.504,-1.260) node[below right=-2pt,font=\small] {$x_{2}$};
				\draw[-{Stealth[length=4pt]},line width=.6pt] (-0.708,-0.838) -- (-3.319,-2.734) node[below left=-2pt,font=\small] {$x_{3}$};
				\draw[-{Stealth[length=4pt]},line width=.6pt] (-0.708,-0.838) -- (-0.708,3.536) node[left=-2pt,font=\small] {$x_{n}$};
				\node[below left=-3pt,font=\small] at (-0.708,-0.838) {$0$};
				\draw[black!40,line width=.7pt,dash pattern=on 2pt off 2pt] (-0.998,-1.515) -- (-0.745,-1.131) -- (-0.492,-0.747) -- (-0.239,-0.363) -- (0.013,0.020) -- (0.013,0.020);
				\definecolor{tbt}{RGB}{223,133,57}
				\definecolor{tbb}{RGB}{227,147,80}
				\shade[top color=tbt,bottom color=tbb,draw=orange!70!black,line width=.4pt] (-0.585,2.170) -- (-0.650,2.033) -- (-0.714,1.897) -- (-0.779,1.760) -- (-0.844,1.623) -- (-0.908,1.487) -- (-0.973,1.350) -- (-1.038,1.213) -- (-1.102,1.077) -- (-1.167,0.940) -- (-1.232,0.803) -- (-1.296,0.666) -- (-1.361,0.530) -- (-1.426,0.393) -- (-1.490,0.256) -- (-1.555,0.120) -- (-1.620,-0.017) -- (-1.685,-0.154) -- (-1.749,-0.290) -- (-1.814,-0.427) -- (-1.879,-0.564) -- (-1.943,-0.700) -- (-2.008,-0.837) -- (-2.073,-0.974) -- (-2.137,-1.110) -- (-2.202,-1.247) -- (-2.267,-1.384) -- (-2.331,-1.520) -- (-2.396,-1.657) -- (-2.461,-1.794) -- (-0.466,-1.295) -- (-0.437,-1.219) -- (-0.408,-1.142) -- (-0.379,-1.063) -- (-0.350,-0.982) -- (-0.320,-0.899) -- (-0.290,-0.814) -- (-0.260,-0.728) -- (-0.229,-0.640) -- (-0.197,-0.550) -- (-0.165,-0.458) -- (-0.132,-0.364) -- (-0.098,-0.269) -- (-0.063,-0.173) -- (-0.028,-0.074) -- (0.009,0.025) -- (0.047,0.126) -- (0.087,0.228) -- (0.128,0.331) -- (0.170,0.434) -- (0.214,0.539) -- (0.260,0.644) -- (0.307,0.750) -- (0.357,0.856) -- (0.408,0.962) -- (0.462,1.068) -- (0.518,1.174) -- (0.576,1.279) -- (0.636,1.383) -- (0.699,1.487) -- (0.731,1.538) -- cycle;
				\definecolor{dbt}{RGB}{67,105,173}
				\definecolor{dbb}{RGB}{66,104,172}
				\shade[top color=dbt,bottom color=dbb,draw=blue!55!black,line width=.4pt] (2.315,-1.064) -- (2.232,-1.109) -- (2.149,-1.154) -- (2.068,-1.199) -- (1.988,-1.245) -- (1.910,-1.290) -- (1.832,-1.336) -- (1.757,-1.382) -- (1.682,-1.428) -- (1.609,-1.474) -- (1.537,-1.520) -- (1.467,-1.566) -- (1.398,-1.613) -- (1.330,-1.660) -- (1.264,-1.706) -- (1.199,-1.753) -- (1.135,-1.800) -- (1.073,-1.847) -- (1.012,-1.895) -- (0.952,-1.942) -- (0.894,-1.990) -- (0.837,-2.038) -- (0.781,-2.085) -- (0.727,-2.133) -- (0.674,-2.182) -- (0.623,-2.230) -- (0.572,-2.278) -- (0.523,-2.327) -- (0.476,-2.375) -- (0.430,-2.424) -- (-0.466,-1.295) -- (-0.437,-1.219) -- (-0.408,-1.142) -- (-0.379,-1.063) -- (-0.350,-0.982) -- (-0.320,-0.899) -- (-0.290,-0.814) -- (-0.260,-0.728) -- (-0.229,-0.640) -- (-0.197,-0.550) -- (-0.165,-0.458) -- (-0.132,-0.364) -- (-0.098,-0.269) -- (-0.063,-0.173) -- (-0.028,-0.074) -- (0.009,0.025) -- (0.047,0.126) -- (0.087,0.228) -- (0.128,0.331) -- (0.170,0.434) -- (0.214,0.539) -- (0.260,0.644) -- (0.307,0.750) -- (0.357,0.856) -- (0.408,0.962) -- (0.462,1.068) -- (0.518,1.174) -- (0.576,1.279) -- (0.636,1.383) -- (0.699,1.487) -- (0.731,1.538) -- cycle;
				\definecolor{dat}{RGB}{79,114,178}
				\definecolor{dab}{RGB}{72,109,175}
				\shade[top color=dat,bottom color=dab,draw=blue!55!black,line width=.4pt] (0.731,1.538) -- (0.667,1.435) -- (0.606,1.331) -- (0.546,1.226) -- (0.489,1.121) -- (0.435,1.015) -- (0.382,0.909) -- (0.332,0.803) -- (0.283,0.697) -- (0.236,0.592) -- (0.192,0.487) -- (0.149,0.382) -- (0.107,0.279) -- (0.067,0.177) -- (0.028,0.075) -- (-0.009,-0.025) -- (-0.046,-0.124) -- (-0.081,-0.221) -- (-0.115,-0.317) -- (-0.148,-0.411) -- (-0.181,-0.504) -- (-0.213,-0.595) -- (-0.244,-0.684) -- (-0.275,-0.771) -- (-0.305,-0.857) -- (-0.335,-0.941) -- (-0.365,-1.023) -- (-0.394,-1.103) -- (-0.423,-1.181) -- (-0.452,-1.257) -- (-1.375,0.970) -- (-1.329,1.018) -- (-1.283,1.067) -- (-1.234,1.116) -- (-1.185,1.164) -- (-1.134,1.213) -- (-1.082,1.261) -- (-1.028,1.309) -- (-0.973,1.357) -- (-0.917,1.405) -- (-0.859,1.452) -- (-0.800,1.500) -- (-0.740,1.547) -- (-0.678,1.594) -- (-0.615,1.641) -- (-0.551,1.688) -- (-0.485,1.735) -- (-0.418,1.782) -- (-0.350,1.829) -- (-0.280,1.875) -- (-0.209,1.921) -- (-0.136,1.968) -- (-0.063,2.014) -- (0.012,2.059) -- (0.089,2.105) -- (0.167,2.151) -- (0.246,2.196) -- (0.326,2.242) -- (0.408,2.287) -- (0.491,2.332) -- (0.533,2.355) -- cycle;
				\definecolor{tat}{RGB}{223,132,55}
				\definecolor{tab}{RGB}{227,149,82}
				\shade[top color=tat,bottom color=tab,draw=orange!70!black,line width=.4pt] (0.731,1.538) -- (0.667,1.435) -- (0.606,1.331) -- (0.546,1.226) -- (0.489,1.121) -- (0.435,1.015) -- (0.382,0.909) -- (0.332,0.803) -- (0.283,0.697) -- (0.236,0.592) -- (0.192,0.487) -- (0.149,0.382) -- (0.107,0.279) -- (0.067,0.177) -- (0.028,0.075) -- (-0.009,-0.025) -- (-0.046,-0.124) -- (-0.081,-0.221) -- (-0.115,-0.317) -- (-0.148,-0.411) -- (-0.181,-0.504) -- (-0.213,-0.595) -- (-0.244,-0.684) -- (-0.275,-0.771) -- (-0.305,-0.857) -- (-0.335,-0.941) -- (-0.365,-1.023) -- (-0.394,-1.103) -- (-0.423,-1.181) -- (-0.452,-1.257) -- (0.585,-1.000) -- (0.650,-0.943) -- (0.714,-0.886) -- (0.779,-0.828) -- (0.844,-0.771) -- (0.908,-0.713) -- (0.973,-0.656) -- (1.038,-0.599) -- (1.102,-0.541) -- (1.167,-0.484) -- (1.232,-0.426) -- (1.296,-0.369) -- (1.361,-0.312) -- (1.426,-0.254) -- (1.490,-0.197) -- (1.555,-0.140) -- (1.620,-0.082) -- (1.685,-0.025) -- (1.749,0.033) -- (1.814,0.090) -- (1.879,0.147) -- (1.943,0.205) -- (2.008,0.262) -- (2.073,0.320) -- (2.137,0.377) -- (2.202,0.434) -- (2.267,0.492) -- (2.331,0.549) -- (2.396,0.606) -- (2.461,0.664) -- (2.493,0.693) -- cycle;
				\draw[black!50,line width=1.2pt] (0.731,1.538) -- (0.636,1.383) -- (0.546,1.226) -- (0.462,1.068) -- (0.382,0.909) -- (0.307,0.750) -- (0.236,0.592) -- (0.170,0.434) -- (0.107,0.279) -- (0.047,0.126) -- (-0.009,-0.025) -- (-0.063,-0.173) -- (-0.115,-0.317) -- (-0.165,-0.458) -- (-0.213,-0.595) -- (-0.260,-0.728) -- (-0.305,-0.857) -- (-0.350,-0.982) -- (-0.394,-1.103) -- (-0.437,-1.219) -- (-0.466,-1.295);
				\draw[black,line width=.95pt] (-1.788,-2.714) -- (-1.535,-2.330) -- (-1.282,-1.947) -- (-1.029,-1.563) -- (-0.998,-1.515);
				\draw[black,line width=.95pt] (0.013,0.020) -- (0.266,0.404) -- (0.519,0.788) -- (0.772,1.172) -- (1.025,1.556) -- (1.277,1.940) -- (1.530,2.323) -- (1.783,2.707) -- (1.973,2.995);
				\node[font=\small] at (2.15,3.087) {$\ell$};
				\draw[black!45,dashed,line width=.45pt] (-1.889,-1.471) -- (1.070,3.022) -- (2.012,1.658) -- (-0.947,-2.834) -- cycle;
				\node[black!55,font=\small,below left=-2pt] at (2.012,1.658) {$\Pi$};
				\fill[black] (0.000,0.000) circle (1.7pt);
				\node[font=\small,above right=-1pt and 1pt] at (0.000,0.000) {$p$};
				\node[font=\small,color=blue!45!black,fill=white,fill opacity=.78,text opacity=1,inner sep=1pt] at (0.516,2.202) {$S_{\delta}$};
				\node[font=\small,color=orange!70!black,fill=white,fill opacity=.78,text opacity=1,inner sep=1pt] at (0.554,-0.977) {$S_{\tau}$};
				\node[font=\small, anchor=north west] at (-3.85,3.35)
				{$\R^{n}$ (phase space)};
			\end{scope}
			
			\begin{scope}[shift={(-2.95,-5.30)}, font=\scriptsize]
				\draw[panel=black!35] (-0.12,-0.12) rectangle (6.15,2.08);
				\draw[black, line width=0.9pt] (0.12,1.80) -- (0.62,1.80);
				\node[anchor=west, align=left] at (0.74,1.70)
				{kernel line $\ell=p+\R q_{0}$,\\[-2pt]
					with $q_{0}$ spanning $\ker DX(p)$};
				\fill[blue!42!white] (0.12,1.04) rectangle (0.62,1.20);
				\draw[blue!55!black, line width=0.4pt]
				(0.12,1.04) rectangle (0.62,1.20);
				\node[anchor=west] at (0.74,1.12)
				{$S_{\delta}=\{\delta=0\}=\{\dt DX=0\}$};
				\fill[orange!52!white] (0.12,0.64) rectangle (0.62,0.80);
				\draw[orange!70!black, line width=0.4pt]
				(0.12,0.64) rectangle (0.62,0.80);
				\node[anchor=west] at (0.74,0.72) {$S_{\tau}=\{\tau=0\}$};
				\draw[black!55, line width=1.3pt] (0.12,0.36) -- (0.62,0.36);
				\node[anchor=west] at (0.74,0.36)
				{the seam $S_{\delta}\cap S_{\tau}$};
				\fill[black] (0.37,0.04) circle (1.5pt);
				\node[anchor=west] at (0.74,0.04)
				{Bogdanov--Takens point $p$};
			\end{scope}
			
			\draw[-{Stealth[length=4.5pt]}, black!55, dashed, line width=0.6pt]
			(3.30,-1.45) to[bend right=14] node[midway, below=2pt, align=center,
			font=\scriptsize, text=black!55] {slice\\$\Pi$} (4.75,-1.95);
			
			\begin{scope}[shift={(4.95,0.35)}]
				
				\begin{scope}
					\draw[panel=black!45] (0,0.80) rectangle (6.35,3.6);
					\node[ptitle] at (0.04,3.46)
					{A.\ transversality, see slice $\Pi$};
					\draw[black, line width=0.9pt] (0.35,1.92) -- (4.45,1.92);
					\node[anchor=west, font=\small] at (4.62,1.92) {$\ell\cap\Pi$};
					\draw[blue!55!black, line width=1pt]
					(0.55,2.72) .. controls (1.45,2.38) and (1.85,2.16)
					.. (2.45,1.92) .. controls (3.05,1.68) and (3.55,1.44)
					.. (4.45,1.14);
					\draw[orange!75!black, line width=1pt]
					(0.55,1.12) .. controls (1.45,1.46) and (1.85,1.68)
					.. (2.45,1.92) .. controls (3.05,2.16) and (3.55,2.40)
					.. (4.45,2.70);
					\node[blue!55!black, font=\small, anchor=west]
					at (4.62,1.14) {$S_{\delta}\cap\Pi$};
					\node[orange!75!black, font=\small, anchor=west]
					at (4.62,2.70) {$S_{\tau}\cap\Pi$};
					\fill[black] (2.45,1.92) circle (1.7pt);
					\node[font=\small, below right=-1pt and 1pt] at (2.25,1.90) {$p$};
				\end{scope}
				
				\begin{scope}
					\draw[panel=blue!55!black] (0,-2.25) rectangle (3.02,0.15);
					\node[ptitle, blue!50!black] at (0.04,0.54)
					{B.\ $\delta$ along $\ell$};
					\draw[-{Stealth[length=3.4pt]}, black!55, line width=0.4pt]
					(0.28,-0.97) -- (2.80,-0.97)
					node[below=-1pt, font=\scriptsize] {$s$};
					\draw[-{Stealth[length=3.4pt]}, black!55, line width=0.4pt]
					(1.34,-1.55) -- (1.34,-0.40);
					\draw[blue!55!black, dash pattern=on 2.6pt off 2pt,
					line width=0.6pt] (0.29,-0.529) -- (2.64,-1.516);
					\draw[blue!60!black, line width=1.1pt]
					(0.39,-0.797) .. controls (1.09,-0.758) and (1.79,-1.087)
					.. (2.49,-1.784);
					\fill[black] (1.34,-0.97) circle (1.6pt);
					\node[font=\scriptsize, above right=-2pt and 0pt]
					at (1.34,-0.97) {$p$};
					\node[tag=blue!50!black, anchor=north east]
					at (2.92,0.05) {slope $=-2a$};
					\node[tag=blue!50!black, anchor=south west]
					at (0.10,-2.15) {$a=-\tfrac12 D_{q_{0}}\delta$};
				\end{scope}
				
				\begin{scope}[shift={(3.33,0)}]
					\draw[panel=orange!70!black] (0,-2.25) rectangle (3.02,0.15);
					\node[ptitle, orange!65!black] at (0.04,0.54)
					{C.\ $\tau$ along $\ell$};
					\draw[-{Stealth[length=3.4pt]}, black!55, line width=0.4pt]
					(0.28,-0.97) -- (2.80,-0.97)
					node[below=-1pt, font=\scriptsize] {$s$};
					\draw[-{Stealth[length=3.4pt]}, black!55, line width=0.4pt]
					(1.34,-1.55) -- (1.34,-0.40);
					\draw[orange!70!black, dash pattern=on 2.6pt off 2pt,
					line width=0.6pt] (0.29,-1.411) -- (2.64,-0.424);
					\draw[orange!80!black, line width=1.1pt]
					(0.39,-1.143) .. controls (1.09,-1.182) and (1.79,-0.853)
					.. (2.49,-0.156);
					\fill[black] (1.34,-0.97) circle (1.6pt);
					\node[font=\scriptsize, below right=-2pt and 0pt]
					at (1.34,-0.97) {$p$};
					\node[tag=orange!65!black, anchor=north west]
					at (0.10, 0.05) {slope $=b$};
					\node[tag=orange!65!black, anchor=south east]
					at (2.92,-2.15) {$b=D_{q_{0}}\tau$};
				\end{scope}
				
				\begin{scope}
					\draw[panel=black!45, fill=black!2]
					(0,-5) rectangle (6.35,-2.35);
					\node[anchor=north west, font=\footnotesize] at (0.28,-2.46)
					{$\displaystyle \delta=\frac{e_{n}}{e_{n-2}}$};
					\node[anchor=north west, font=\footnotesize] at (2.35,-2.46)
					{$\displaystyle \tau=\frac{e_{n-1}-\delta\,e_{n-3}}{e_{n-2}}$};
					\draw[black!25, line width=0.4pt] (0.28,-3.46) -- (6.07,-3.46);
					\fill[blue!55!black] (0.40,-3.72) circle (1.3pt);
					\node[anchor=west, font=\footnotesize, text=blue!45!black]
					at (0.58,-3.7) {$a\neq0 \iff \ell\pitchfork_{p}S_{\delta}$};
					\fill[orange!70!black] (0.40,-4.08) circle (1.3pt);
					\node[anchor=west, font=\footnotesize, text=orange!65!black]
					at (0.58,-4.08) {$b\neq0 \iff \ell\pitchfork_{p}S_{\tau}$};
					\node[anchor=west, font=\scriptsize] at (0.40,-4.46)
					{$\blacktriangleright\ \operatorname{sign}(ab)\in\{-1,+1\}$
					};
					\node[anchor=west,font=\scriptsize]
					at (0.40,-4.8)
					{determines the topological type};
				\end{scope}
				
			\end{scope}
			
		\end{tikzpicture}
		
		\caption{\label{fig:transv}%
			\textbf{Geometric reading of the two identities.}
			The central determinant $\delta=e_{n}/e_{n-2}$ and the central
			trace $\tau=(e_{n-1}-\delta\,e_{n-3})/e_{n-2}$ of
			Definition~\ref{def:deltatau} are rational functions of the
			principal minors of $DX$: no eigenvector of $DX$ or of its
			adjoint, no generalized eigenvector and no local centre manifold
			enters their construction.  Both vanish at the Bogdanov--Takens
			point $p$, so their zero sets $S_{\delta}$ and $S_{\tau}$ pass
			through $p$ and meet along the codimension-two seam drawn in
			grey.  By Corollary~\ref{cor:transv} the nondegeneracy conditions
			$a\neq0$ and $b\neq0$ are exactly transversality of the kernel
			line $\ell=p+\R q_{0}$ to $S_{\delta}$ and to $S_{\tau}$, visible
			in the slice $\Pi$ as two nonzero crossing angles (panel A);
			and by Proposition~\ref{prop:geom} the two derivatives that
			measure this transversality are the BT coefficients themselves,
			since $\delta$ and $\tau$ restricted to $\ell$ vanish at $p$ with
			slopes $-2a$ and $b$; in panels B and C the solid curve is the restriction and the dashed line its tangent at $p$.  Two cautions.  The
			three-dimensional drawing is a schematic: for $n\ge3$ the sets
			$S_{\delta}$ and $S_{\tau}$ are hypersurfaces of $\R^{n}$, and the
			sheets shown are two-dimensional slices of them.  And $a\neq0$
			together with $b\neq0$ gives transversality of $\ell$ to each
			hypersurface separately; it does not by itself make $S_{\delta}$
			and $S_{\tau}$ transverse to each other.}
	\end{figure}
	
	The identities \eqref{eq:mainbis} become geometric once one names the two
	scalar functions they are the derivatives of.
	
	\begin{definition}\label{def:deltatau}
		Let $U'\subset U$ be the open set where $e_{n-2}(DX(x))\neq0$.  On
		$U'$ define the \emph{central determinant} and the \emph{central
			trace} of $X$ by
		\begin{equation}\label{eq:deltatau}
			\delta:=\frac{e_{n}(DX)}{e_{n-2}(DX)},
			\qquad
			\tau:=\frac{e_{n-1}(DX)-\delta\,e_{n-3}(DX)}{e_{n-2}(DX)},
		\end{equation}
		equivalently
		$\tau=\bigl(e_{n-1}(DX)e_{n-2}(DX)-e_{n}(DX)e_{n-3}(DX)\bigr)
		/e_{n-2}(DX)^{2}$.
		Both are rational functions of the principal minors of $DX$, and their
		evaluation requires no eigenvector of $DX$ or of its adjoint, no
		explicit local centre manifold, and no numerical spectral
		factorization: one characteristic polynomial delivers all four
		invariants involved.  For $n=2$ they are $\delta=\dt DX$ and
		$\tau=\tr DX$.
	\end{definition}
	
	\begin{proposition}\label{prop:geom}
		Under $(\mathrm{BT}_{0})$ with $X\in C^{2}$,
		\[
		\delta(p)=\tau(p)=0,
		\qquad
		a=-\tfrac12\,D_{q_{0}}\delta,
		\qquad
		b=D_{q_{0}}\tau .
		\]
	\end{proposition}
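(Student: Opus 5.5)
The plan is to reduce everything to Theorem~\ref{thm:main} by the quotient rule. Throughout, write $e_k$ for $e_k(DX(x))$, viewed as a $C^1$ function of $x$ (it is a polynomial in the entries of $DX$, and $X\in C^2$). First, $\delta$ and $\tau$ are well defined and $C^1$ near $p$: condition (iii) of $(\mathrm{BT}_{0})$ and continuity give $e_{n-2}\neq0$ on a neighbourhood of $p$, so $p\in U'$. At $p$, Lemma~\ref{lem:split} gives $e_n(J)=0$, hence $\delta(p)=0$. Condition (ii) gives $e_{n-1}(J)=0$, hence $\tau(p)=(0-0\cdot e_{n-3})/e_{n-2}=0$. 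For $n=2$ the conventions $e_0=1$, $e_{-1}=0$ apply, so no case distinction is needed.

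Next we differentiate $\delta$ along $q_0$ at $p$. Since $e_n(p)=0$, the quotient rule gives
\[
D_{q_0}\delta=\frac{D_{q_0}e_n}{e_{n-2}}-\frac{e_n\,D_{q_0}e_{n-2}}{e_{n-2}^2}=\frac{D_{q_0}e_n}{e_{n-2}}.
\]
The first identity in \eqref{eq:two} then yields $D_{q_0}\delta=-2a$.

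For $\tau$ we use the form $\tau=(e_{n-1}-\delta\,e_{n-3})/e_{n-2}$. The numerator vanishes at $p$, so again only its derivative divided by $e_{n-2}$ survives. By the product rule and $\delta(p)=0$, the numerator has derivative $D_{q_0}e_{n-1}-e_{n-3}\,D_{q_0}\delta$. Substituting $D_{q_0}\delta=-2a$ and the second identity of \eqref{eq:two} gives
\[
D_{q_0}\tau=\frac{b\,e_{n-2}-2a\,e_{n-3}+2a\,e_{n-3}}{e_{n-2}}=b.
\]
Equivalently, this is just the second formula of \eqref{eq:mainbis}.

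There is no real obstacle here. The only points needing care are, first, the observation that every term involving the derivatives of $e_{n-2}$ or $e_{n-3}$ drops out, because they are always multiplied by $e_n(p)$, $e_{n-1}(p)$ or $\delta(p)$, all of which vanish. Second, one must check that $e_{n-3}$ is merely continuous, or $0$ when $n=2$, so that the product rule applies. Neither hyperbolicity nor Lemma~\ref{lem:identify} is used: $a$ and $b$ are understood as in Definition~\ref{def:ab}.
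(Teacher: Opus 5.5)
Your proof is correct and follows the paper's argument: the quotient rule, the vanishing of $e_{n}$ and $e_{n-1}$ at $p$, and substitution of \eqref{eq:two}. The only difference is cosmetic. You differentiate $\tau$ in the form $(e_{n-1}-\delta\,e_{n-3})/e_{n-2}$, whereas the paper differentiates the numerator $e_{n-1}e_{n-2}-e_{n}e_{n-3}$ over $e_{n-2}^{2}$, and the two computations are equivalent.
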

	
	\begin{proof}
		$\delta(p)=0$ and $\tau(p)=0$ because $e_{n}(J)=e_{n-1}(J)=0$.  By the
		quotient rule and $e_{n}(J)=0$,
		$D_{q_{0}}\delta=D_{q_{0}}e_{n}/e_{n-2}=-2a$ by \eqref{eq:two}.  For
		$\tau$, the numerator $\mathcal N:=e_{n-1}e_{n-2}-e_{n}e_{n-3}$ vanishes
		at $p$, so $D_{q_{0}}\tau=D_{q_{0}}\mathcal N/e_{n-2}^{2}$, and using
		$e_{n-1}(J)=e_{n}(J)=0$ again,
		\[
		D_{q_{0}}\mathcal N=e_{n-2}D_{q_{0}}e_{n-1}-e_{n-3}D_{q_{0}}e_{n}
		=e_{n-2}\bigl(be_{n-2}-2ae_{n-3}\bigr)-e_{n-3}\bigl(-2ae_{n-2}\bigr)
		=b\,e_{n-2}^{2}. \qedhere
		\]
	\end{proof}
	
	Thus in every dimension the BT data are, verbatim, the planar formulas
	\eqref{eq:planar} with $\dt$ and $\tr$ replaced by $\delta$ and $\tau$.
	The name is justified by the following, and by nothing stronger: at $p$
	the numbers $\delta,\tau$ and their first derivatives along $\ker J$
	coincide with the determinant, the trace, and their first derivatives for
	the Jacobian of the reduced field on a centre manifold, because both
	pairs vanish at $p$ and \eqref{eq:two} pins the derivatives.  Away from
	$\ker J$, and at higher order, $\delta$ and $\tau$ are \emph{not} the
	determinant and trace of any invariantly defined $2\times2$ block; they
	are the two rational functions of the principal minors whose $1$-jet
	along the kernel line reproduces $-2a$ and $b$.
	
	\begin{corollary}[Nondegeneracy as transversality]\label{cor:transv}
		Let $S_{\delta}:=\{x\in U':\delta(x)=0\}=\{x\in U':\dt DX(x)=0\}$ and
		$S_{\tau}:=\{x\in U':\tau(x)=0\}$, and let
		$\ell:=p+\R q_{0}$ be the kernel line.  Then
		\[
		a\neq0\iff \ell\pitchfork_{p} S_{\delta},
		\qquad
		b\neq0\iff \ell\pitchfork_{p} S_{\tau},
		\]
		and in either case the corresponding set is a $C^{1}$ hypersurface
		near $p$.
	\end{corollary}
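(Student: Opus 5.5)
The corollary follows from Proposition~\ref{prop:geom}, the regular value theorem, and an elementary description of transversality for a line. I will first fix regularity. Since $X\in C^{2}$, the map $x\mapsto DX(x)$ is $C^{1}$, so each $e_{k}(DX(x))$ is $C^{1}$ as a polynomial in the entries. On the open set $U'$, which contains $p$ because $e_{n-2}(J)\neq0$, the functions $\delta$ and $\tau$ are therefore $C^{1}$. Also $S_{\delta}=\{e_{n}(DX)=0\}\cap U'$, because $e_{n-2}\neq0$ there; this justifies the second description of $S_{\delta}$ in the statement.

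Next I will read transversality off the differential. I use the standard convention that a line $\ell$ through $p$ is transverse at $p$ to the zero set $S_{f}$ of a $C^{1}$ function $f$ with $f(p)=0$ when $df(p)\neq0$ and $q_{0}\notin T_{p}S_{f}=\ker df(p)$. Since $df(p)q_{0}=D_{q_{0}}f$, both conditions together amount to $D_{q_{0}}f\neq0$: a nonzero directional derivative forces $df(p)\neq0$. Conversely, I interpret $\ell\pitchfork_{p}S_{f}$ as requiring $S_{f}$ to be a hypersurface at $p$, so that $T_{p}S_{f}$ is defined; if $df(p)=0$, transversality fails by convention. This convention is the only delicate point, so I would state it explicitly at the start of the proof.

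With this settled, I apply it to $f=\delta$ and $f=\tau$. Proposition~\ref{prop:geom} gives $\delta(p)=\tau(p)=0$, $D_{q_{0}}\delta=-2a$ and $D_{q_{0}}\tau=b$. Hence $a\neq0$ if and only if $D_{q_{0}}\delta\neq0$, if and only if $\ell\pitchfork_{p}S_{\delta}$, and likewise $b\neq0$ if and only if $\ell\pitchfork_{p}S_{\tau}$.

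Finally, the hypersurface claim: in either case the relevant $C^{1}$ function has $df(p)\neq0$. By the implicit function theorem, its zero set is, near $p$, a $C^{1}$ embedded hypersurface with tangent space $\ker df(p)$, and this tangent space does not contain $q_{0}$. So $\ell$ crosses that hypersurface at $p$ with nonzero angle, which is exactly the picture in Figure~\ref{fig:transv}.
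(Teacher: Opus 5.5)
Your proposal is correct and is essentially the paper's proof: both use Proposition~\ref{prop:geom} to get $D_{q_{0}}\delta=-2a$ and $D_{q_{0}}\tau=b$, read $\ell\pitchfork_{p}S_{f}$ as $df(p)\neq0$ with $q_{0}\notin\ker df(p)$, and get the $C^{1}$ hypersurface from the implicit function theorem. Stating that convention explicitly is worthwhile, and the paper's ``i.e.'' tacitly relies on it too, but the reverse implication needs the clause $df(p)\neq0$ itself, not merely that $S_{f}$ be a hypersurface: for $X=(y,x^{4})$ in the plane, $S_{\delta}=\{x=0\}$ is a line transverse to $\ell$ at the origin, yet $a=0$.
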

	
	\begin{proof}
		$D_{q_{0}}\delta=-2a$ and $D_{q_{0}}\tau=b$ by
		Proposition~\ref{prop:geom}; a nonzero directional derivative of a
		$C^{1}$ function along $q_{0}$ is exactly the statement that
		$d\delta_{p}\neq0$ with $q_{0}\notin\ker d\delta_{p}$, i.e.\ that the
		level set is a hypersurface transverse to $\ell$ at $p$.
	\end{proof}
	
	Note that $a,b\neq0$ gives transversality of $\ell$ to each hypersurface
	separately; it does not by itself make $S_{\delta}$ and $S_{\tau}$
	transverse to each other.
	
	\begin{corollary}[Topological type of the versal unfolding]\label{cor:sign}
		Let $X\in C^{3}$ satisfy the hypotheses of Corollary~\ref{cor:test}.
		Then
		\begin{equation}\label{eq:abprod}
			ab=-\frac{D_{q_{0}}e_{n}\bigl(e_{n-2}D_{q_{0}}e_{n-1}
				-e_{n-3}D_{q_{0}}e_{n}\bigr)}{2\,e_{n-2}^{3}},
		\end{equation}
		the versal unfolding of the restriction of $X$ to a centre manifold is
		smoothly equivalent to
		\begin{equation}\label{eq:versal}
			\dot\eta_{1}=\eta_{2},
			\qquad
			\dot\eta_{2}=\beta_{1}+\beta_{2}\eta_{1}+\eta_{1}^{2}
			+s\,\eta_{1}\eta_{2},
			\qquad s=\operatorname{sign}(ab)\in\{\pm1\},
		\end{equation}
		and
		\begin{equation}\label{eq:signab}
			s=-\operatorname{sign}\Bigl[
			e_{n-2}\;D_{q_{0}}e_{n}\;
			\bigl(e_{n-2}D_{q_{0}}e_{n-1}-e_{n-3}D_{q_{0}}e_{n}\bigr)\Bigr].
		\end{equation}
	\end{corollary}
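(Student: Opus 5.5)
The corollary follows from three earlier results: Theorem~\ref{thm:main} for the formulas, Corollary~\ref{cor:test} for nondegeneracy, and the classical Bogdanov--Takens versal unfolding theorem \cite{Kuznetsov2004} for the normal form.

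First, \eqref{eq:abprod} is a direct multiplication of the two expressions in \eqref{eq:mainbis}. Writing $b=(e_{n-2}D_{q_{0}}e_{n-1}-e_{n-3}D_{q_{0}}e_{n})/e_{n-2}^{2}$ and multiplying by $a=-D_{q_{0}}e_{n}/(2e_{n-2})$ gives the stated quotient with denominator $2e_{n-2}^{3}$. This is valid because $e_{n-2}\neq0$ by $(\mathrm{BT}_{0})$.

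Second, for \eqref{eq:signab} I would multiply numerator and denominator of \eqref{eq:abprod} by $e_{n-2}$. This gives the denominator $2e_{n-2}^{4}>0$. Hence $\operatorname{sign}(ab)$ equals minus the sign of $e_{n-2}\,D_{q_{0}}e_{n}\,(e_{n-2}D_{q_{0}}e_{n-1}-e_{n-3}D_{q_{0}}e_{n})$. Conditions (c) and (d) of Corollary~\ref{cor:test} guarantee that this product is nonzero, so $s\in\{\pm1\}$. By Remark~\ref{rem:scale}, rescaling $q_{0}$ multiplies the product by $c^{2}>0$, so the sign is independent of the normalization of $q_{0}$.

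Third, the versal form \eqref{eq:versal}. Under the hypotheses of Corollary~\ref{cor:test}, Lemma~\ref{lem:identify} shows that the restriction of $X$ to a centre manifold has 2-jet \eqref{eq:BT}, with the $(a,b)$ of Theorem~\ref{thm:main} and $ab\neq0$. The classical result \cite[Thm.~8.4]{Kuznetsov2004} then applies: a generic two-parameter unfolding of such a germ is smoothly equivalent to $\dot\eta_{1}=\eta_{2}$, $\dot\eta_{2}=\beta_{1}+\beta_{2}\eta_{1}+\eta_{1}^{2}+s\eta_{1}\eta_{2}$ with $s=\operatorname{sign}(ab)$. The sign arises from the rescaling $u_{1}\mapsto(a/b^{2})u_{1}$, $t\mapsto|b/a|t$.

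The main obstacle is this third step. The transversality of the family with respect to its parameters is not a property of the single germ. I would therefore state it explicitly as the standing genericity assumption implicit in ``versal unfolding'', as in the remark after Corollary~\ref{cor:test}. Granting it, only the sign bookkeeping remains, and that is routine.
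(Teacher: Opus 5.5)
Your proposal is correct and is essentially the paper's proof. Both proofs get \eqref{eq:abprod} as the product of the two formulas in \eqref{eq:mainbis}, read the sign off the fact that the denominator has the sign of $e_{n-2}$ (your multiplication by $e_{n-2}/e_{n-2}$ is equivalent to the paper's $\operatorname{sign}(e_{n-2}^{3})=\operatorname{sign}(e_{n-2})$), and cite Kuznetsov's classical reduction of \eqref{eq:BT} with $ab\neq0$ to \eqref{eq:versal}. Your flagging of the parameter-transversality assumption matches the paper's remark after Corollary~\ref{cor:test}.
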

	
	\begin{proof}
		\eqref{eq:abprod} is the product of the two formulas
		\eqref{eq:mainbis}.  The reduction of \eqref{eq:BT} with $ab\neq0$ to
		\eqref{eq:versal} with $s=\operatorname{sign}(ab)$ is classical
		\cite[Lemma~8.10 and Thm.~8.4]{Kuznetsov2004}.  Since
		$\operatorname{sign}(e_{n-2}^{3})=\operatorname{sign}(e_{n-2})$,
		\eqref{eq:abprod} gives \eqref{eq:signab}.
	\end{proof}
	
	\begin{remark}[Scope of the invariance]\label{rem:scope}
		Neither $a$, nor $b$, nor the product $ab$ is a coordinate-free number:
		by Lemma~\ref{lem:welldef}(b) the substitution $q_{0}\mapsto cq_{0}$
		sends $(a,b)$ to $(ca,cb)$ and $ab$ to $c^{2}ab$, and the passage from
		\eqref{eq:BT} to \eqref{eq:versal} rescales them further.  What is
		claimed here is exactly this: the right-hand side of
		\eqref{eq:signab} is unchanged (i) under $q_{0}\mapsto cq_{0}$, since
		the bracket is then multiplied by $c^{2}>0$, and (ii) under any
		$C^{2}$ change of phase-space coordinates with $q_{0}$ transported as
		in Lemma~\ref{lem:inv}, since $e_{n-2},e_{n-3}$ are similarity
		invariants of $J$ and the two directional derivatives are invariant by
		that lemma.  Under those transformations $s$ is well defined, and it
		is the standard topological modulus of the versal Bogdanov--Takens
		unfolding.  We make no claim about orbital equivalence or time
		reparametrizations, under which $a$ and $b$ scale differently.
	\end{remark}
	
	\begin{remark}[What each identity controls]\label{rem:controls}
		The roles of $a$ and $b$ are read off from the unfolding
		\emph{before} the rescaling that produces \eqref{eq:versal}, namely
		\begin{equation}\label{eq:unfold}
			\dot\eta_{1}=\eta_{2},
			\qquad
			\dot\eta_{2}=\beta_{1}+\beta_{2}\eta_{1}+a\,\eta_{1}^{2}
			+b\,\eta_{1}\eta_{2}+\mathcal O(|\eta|^{3}),
		\end{equation}
		in the coordinates of Lemma~\ref{lem:identify}.  Its equilibria are
		$\eta_{2}=0$, $\beta_{1}+\beta_{2}\eta_{1}+a\eta_{1}^{2}=0$, and there
		\[
		J_{c}=\begin{pmatrix}0&1\\ \beta_{2}+2a\eta_{1}& b\eta_{1}\end{pmatrix},
		\qquad
		\tr J_{c}=b\,\eta_{1},
		\qquad
		\dt J_{c}=-(\beta_{2}+2a\eta_{1}).
		\]
		Three consequences.
		\begin{enumerate}[label=(\roman*),leftmargin=*]
			\item \emph{Curvature of the equilibrium branch.}  On the line
			$\beta_{2}=0$ the branch is $\beta_{1}=-a\eta_{1}^{2}$, so its
			curvature at the origin is
			$\partial^{2}\beta_{1}/\partial\eta_{1}^{2}=-2a
			=D_{q_{0}}e_{n}/e_{n-2}$.
			\item \emph{Saddle-node curve.}  Imposing $\dt J_{c}=0$ on the
			equilibrium set gives the parametrization
			$(\beta_{1},\beta_{2})=(a\eta_{1}^{2},-2a\eta_{1})$, i.e.\
			$\beta_{2}^{2}=4a\beta_{1}$; this is a nondegenerate cusp exactly
			when $a\neq0$, that is, exactly when $\ell\pitchfork S_{\delta}$ by
			Corollary~\ref{cor:transv}.
			\item \emph{Hopf half-line.}  Imposing $\tr J_{c}=0$ gives
			$b\eta_{1}=0$; if $b\neq0$ this forces $\eta_{1}=0$, hence
			$\beta_{1}=0$, and $\dt J_{c}=-\beta_{2}>0$ selects the half-line
			$\beta_{1}=0$, $\beta_{2}<0$.  If $b=0$ the trace does not vary
			along the branch at first order and no Hopf half-line emanates
			from the origin at this order.  So $b\neq0$, i.e.\
			$\ell\pitchfork S_{\tau}$, is what creates the Hopf branch.
		\end{enumerate}
		The numbers $-2a$ and $b$ appearing in (i)--(iii) are attached to the
		representation \eqref{eq:unfold}, which depends on the normalization of
		$q_{0}$ and on the chosen unfolding parameters; by
		Remark~\ref{rem:scope} it is their vanishing and the sign of their
		product that are intrinsic.  Finally, identity \eqref{eq:trace} says
		that the naive candidate $D_{q_{0}}\tr DX$ is the wrong quantity as
		soon as $n\ge3$: it exceeds $b$ by $\tr\Ltr$, and the second term
		$-e_{n-3}D_{q_{0}}e_{n}/e_{n-2}^{2}$ of \eqref{eq:mainbis} is exactly
		the correction that removes the transverse contribution.
	\end{remark}
	
	\section{What transfers to a Hopf point, and what does not}\label{sec:hopf}
	
	\subsection{The linear level}
	
	At a Hopf point the analogue of Theorem~\ref{thm:main} at the linear
	level is classical.  If $J$ has a pair $\pm i\omega_{0}$, the Hurwitz
	determinant $\Delta_{n-1}$ of $\chi_{J}$ satisfies Orlando's identity
	\cite{Orlando1911}
	\begin{equation}\label{eq:orlando}
		\Delta_{n-1}=(-1)^{\frac{n(n-1)}{2}}\prod_{i<j}(\mu_{i}+\mu_{j}),
	\end{equation}
	so $\Delta_{n-1}(J)=0$ is a \emph{necessary} condition for a purely
	imaginary pair; it is not sufficient, since $\Delta_{n-1}$ also vanishes
	whenever any two eigenvalues sum to zero, for instance a real transverse
	pair $\{\mu,-\mu\}$.  Writing
	$\mu_{1,2}=\tfrac12\tau_{c}\pm i\omega$ for the critical pair,
	\eqref{eq:orlando} factors as $\Delta_{n-1}=\tau_{c}\,\Omega$ with
	\begin{equation}\label{eq:Omega}
		\Omega=(-1)^{\frac{n(n-1)}{2}}(-1)^{\frac{(n-2)(n-3)}{2}}\,
		\Delta_{n-3}\bigl(\chitr\bigr)\;
		\Res_{\lambda}\bigl(\chitr(\lambda),\,
		\lambda^{2}-\tau_{c}\lambda+\delta_{c}\bigr) ;
	\end{equation}
	one checks this by splitting the product in \eqref{eq:orlando} into the
	pairs with both indices transverse, which give $\Delta_{n-3}(\chitr)$ by
	\eqref{eq:orlando} applied to $\chitr$, and the mixed pairs, whose
	product is $\prod_{j\ge3}(\mu_{j}^{2}-\tau_{c}\mu_{j}+\delta_{c})
	=\Res_{\lambda}(\chitr,\lambda^{2}-\tau_{c}\lambda+\delta_{c})$.  At the
	critical point $\Res=|\chitr(i\omega_{0})|^{2}$, which is nonzero exactly
	when $\pm i\omega_{0}$ is a simple pair, while
	$\Delta_{n-3}(\chitr)\neq0$ exactly when no two transverse eigenvalues
	sum to zero.  \emph{Both} conditions are needed: assuming only the
	absence of a second purely imaginary pair and of a zero eigenvalue is not
	enough.  Under them the transversality condition becomes
	$\frac{d}{d\theta}\operatorname{Re}\lambda
	=\bigl(2\Omega\bigr)^{-1}D_{\theta}\Delta_{n-1}$, for a one-parameter
	family with parameter $\theta$, which is the criterion of
	Liu \cite{Liu1994}; see also Kruff and Walcher
	\cite{KruffWalcher2020} for a coordinate-free treatment.
	
	\subsection{The nonlinear level: an obstruction of order, not of structure}
	
	Let $q\in\C^{n}$ and $\varphi\in\C^{n}$ be right and left eigenvectors
	of $J$ for the eigenvalue $i\omega_{0}$, that is $Jq=i\omega_{0}q$ and
	$J^{\top}\varphi=-i\omega_{0}\varphi$, normalized by
	$\ip{\varphi}{q}=1$, and let $B=D^{2}X(p)$ and $C=D^{3}X(p)$.  The first
	Lyapunov coefficient can then be written using directional derivatives of
	$DX$ regarded as a matrix-valued map,
	\begin{equation}\label{eq:l1}
		\ell_{1}=\frac{1}{2\omega_{0}}\operatorname{Re}
		\bigl\langle \varphi,\;
		(D_{\bar q}D_{q}J)q
		-2\,(D_{q}J)J^{-1}(D_{\bar q}J)q
		+(D_{\bar q}J)(2i\omega_{0}I-J)^{-1}(D_{q}J)q
		\bigr\rangle,
	\end{equation}
	but \eqref{eq:l1} is only a rewriting of the standard formula
	\cite[Ch.~5]{Kuznetsov2004} with $(D_{v}J)w=B(v,w)$ and
	$(D_{w}D_{v}J)u=C(v,w,u)$; it is not a formula in the scalar invariants
	$e_{k}$.  What can be asserted rigorously is the following.
	
	\begin{proposition}\label{prop:hopf}
		$\ell_{1}$ is not a function of $\bigl\{J,\ D_{v}e_{k}(DX)(p)\bigr\}$.
	\end{proposition}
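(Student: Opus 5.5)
The plan is to exhibit two vector fields with the same data $\bigl\{J,\ D_{v}e_{k}(DX)(p)\bigr\}$ for all $v\in\R^{n}$ and all $k$, but with different first Lyapunov coefficients. The key observation is that the data depend only on $J$ and on the second derivative $B=D^{2}X(p)$, through the linear maps $v\mapsto D_{v}e_{k}$. By Jacobi's formula (Lemma~\ref{lem:jacobi}), $D_{v}e_{k}$ is a linear functional of $L_{v}:=B(v,\cdot)$. The third derivative $C=D^{3}X(p)$ does not enter at all, whereas in \eqref{eq:l1} it appears through the term $\ip{\varphi}{C(q,\bar q,q)}$. This is the ``obstruction of order'' announced in the section title.

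Concretely, I would work in the plane, $n=2$, which is enough to refute the statement as a claim about all $n$; if needed, one can append a hyperbolic transverse block $\operatorname{diag}(-1,\dots)$ and let the cubic terms act only on the first two coordinates. Take $J=\begin{psmallmatrix}0&-\omega_{0}\\ \omega_{0}&0\end{psmallmatrix}$ and $B=0$, and compare the two fields $X_{\pm}(x)=Jx\pm|x|^{2}x$. Their second derivatives vanish at $0$, so $D_{v}e_{k}(DX_{\pm})(0)=0$ for every $v$ and $k$: each $e_{k}(DX_{\pm}(x))$ is $e_{k}(J)+\mathcal O(|x|^{2})$. Hence the data coincide. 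On the other hand, with $B=0$, formula \eqref{eq:l1} reduces to $\ell_{1}=\frac{1}{2\omega_{0}}\operatorname{Re}\ip{\varphi}{C(q,q,\bar q)}$. For $X_{\pm}$ the radial equation is $\dot r=\pm r^{3}$, so $\ell_{1}$ has sign $\pm$ and is nonzero; the standard value is $\ell_{1}=\pm1$ up to a positive normalization factor. Two fields with identical data and $\ell_{1}$ of opposite signs show that no function of the data can return $\ell_{1}$.

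The only step needing care is making sure ``the data'' is read as the full first-order jet of the $e_{k}$ in every direction $v$, together with $J$. Under that reading, the key lemma is that $D_{v}e_{k}(DX)(p)$ depends on the $2$-jet alone. This follows from the chain rule exactly as in \eqref{eq:step1}: $D_{v}e_{k}$ is a coefficient of $-\tr(\adj(\lambda I-J)B(v,\cdot))$. I would state this once and then invoke the example.

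I would also add a remark that even among fields with $C=0$, the quadratic part cannot be recovered from the data: the maps $v\mapsto\tr(\adj(\lambda I-J)B(v,\cdot))$ do not determine $B$. However, this is not needed for the proof, and the cubic example is the cleanest route. I expect no real obstacle beyond checking the sign of $\ell_{1}$ for $X_{\pm}$. This can be done either via the polar-coordinate computation or by direct evaluation of $\ip{\varphi}{C(q,q,\bar q)}$ with $q=(1,-i)/\sqrt2$ and a suitably normalized $\varphi$.
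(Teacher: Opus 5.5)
Your proposal is correct and is essentially the paper's proof: the data $D_{v}e_{k}(DX)(p)$ depend only on the $2$-jet of $X$ at $p$, while $\ell_{1}$ depends on $D^{3}X(p)$, so two fields with the same $2$-jet but different cubic terms settle the claim. The paper uses the family $\dot x=-y$, $\dot y=x+g\,x^{2}y$ with $\ell_{1}=g/8$; your pair $X_{\pm}(x)=Jx\pm|x|^{2}x$, with $\dot r=\pm r^{3}$, does the same job by giving $\ell_{1}$ of opposite signs.
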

	
	\begin{proof}
		$D_{v}e_{k}(DX)(p)$ depends only on the $2$-jet of $X$ at $p$, whereas
		$\ell_{1}$ depends on $D^{3}X(p)$.  Concretely, the planar family
		$\dot x=-y$, $\dot y=x+g\,x^{2}y$ has, for every $g$, the same $2$-jet
		at the origin, namely the linear rotation, while its first Lyapunov
		quantity equals $g/8$.
	\end{proof}
	
	Thus the programme of the present paper, which uses \emph{first-order}
	directional derivatives of the characteristic invariants, does not extend
	to $\ell_{1}$.  We stress that this is an obstruction of order and not of
	structure, and in particular the following is \emph{not} true: that a
	symmetric scalar invariant of $DX(x)$ cannot record an ordered
	composition of two copies of $B$ mediated by a resolvent.  It can, at
	second order.  Indeed, with $M=\lambda I-J$, $B_{v}:=B(v,\cdot)$ and
	$K=M^{-1}B_{v}$, the expansion
	$\dt(M-tB_{v})=\dt M\cdot\dt(I-tK)=\dt M\sum_{j}(-t)^{j}e_{j}(K)$
	gives
	\begin{equation}\label{eq:second}
		\left.\frac{d^{2}}{dt^{2}}\right|_{0}\chi_{DX(p+tv)}(\lambda)
		=\dt M\Bigl[(\tr K)^{2}-\tr(K^{2})\Bigr]
		+\tr\Bigl(\adj(M)\,D^{2}(DX)_{p}[v,v]\Bigr),
	\end{equation}
	and the term
	$\dt(\lambda I-J)\,\tr\bigl(((\lambda I-J)^{-1}B(v,\cdot))^{2}\bigr)$ is
	exactly such an ordered composition; knowing the polynomial
	\eqref{eq:second} for all $\lambda$ determines that rational function of
	$\lambda$ and hence its value at $\lambda=2i\omega_{0}$.  Whether some
	higher-order jet of the characteristic invariants determines $\ell_{1}$
	is an open question that we do not address.
	
	There is nevertheless a structural reason why the BT case is so clean and
	should not be expected to repeat.  At a BT point the relevant object is
	$\adj(\lambda I_{2}-N)$, whose constant term is the rank-one matrix
	$E_{12}$; contracting it against $L_{c}$ isolates the single entry
	$[L_{c}]_{21}=2a$, and the linear term isolates $\tr L_{c}=b$.  Both are
	first-order data of $x\mapsto DX(x)$, hence visible to a first derivative
	of a scalar invariant.  The BT coefficients are quadratic data attached
	to a nilpotent block; $\ell_{1}$ is not.
	
	\section{Algorithm and implementation}\label{sec:algo}
	
	Formula \eqref{eq:mainbis} suggests the following algorithm, in which $p$
	and possibly parameters are symbolic.
	
	\begin{enumerate}[label=\textbf{S\arabic*.},leftmargin=*]
		\item Compute $q_{0}$ spanning $\ker DX(p)$ and check
		$\rank DX(p)=n-1$.
		\item Substitute $x\mapsto p+s\,q_{0}$ into $J(x)=DX(x)$
		\emph{before} any further computation.  This turns a matrix of
		multivariate polynomials into a matrix of univariate polynomials in
		$s$.
		\item Compute $\chi(\lambda,s)=\dt(\lambda I-J(p+sq_{0}))$ once and
		truncate at order $\mathcal O(s^{2})$.
		\item Read $e_{n},e_{n-1},e_{n-2},e_{n-3}$ off the coefficient list of
		$\chi$ in $\lambda$, with the convention $e_{k}=0$ for $k<0$, and
		apply \eqref{eq:mainbis}.
	\end{enumerate}
	
	Three remarks on cost.  First, only four of the $e_{k}$ are ever needed;
	computing them from the definition as sums of principal minors costs
	$1+n+\binom n2+\binom n3=O(n^{3})$ determinants of size at most $n$,
	whereas one characteristic polynomial delivers all of them at once.  Over
	a polynomial ring a division-free method such as the Berkowitz algorithm
	computes $\chi$ in $O(n^{4})$ ring operations while avoiding the
	denominators introduced by Faddeev--LeVerrier or by fraction-free
	elimination; we do not make any claim about which method a particular
	computer algebra system uses internally.  Second, truncating in $s$ at
	S2--S3 rather than after expansion is what prevents intermediate
	expression swell: the BT data are a first-order jet, and there is no
	reason to build the full multivariate characteristic polynomial.  Third,
	in contrast with \eqref{eq:abKuz}, no null vector of $J^{\top}$ and no
	generalized eigenvector $q_{1}$ is required, so two linear solves over
	the field of rational functions in the parameters are avoided.
	
	A minimal implementation is the following.  Two details are easy to get
	wrong.  The guard \texttt{If[k<0,0,\ldots]} is what makes the case $n=2$,
	in which $k=n-3=-1$, avoid an invalid part specification.  And the
	characteristic polynomial has to be formed as a determinant, because the
	built-in \textsc{Wolfram Language} function returns $\dt(J-\lambda I)$,
	which is $(-1)^{n}$ times the convention \eqref{eq:chi} used here; using
	it would flip the sign of every $e_{k}$ for odd $n$.
	
	\begin{verbatim}
		BTCoefficients[X_List, vars_List, p_List, q0_List] :=
		Module[{n = Length[vars], s, lam, J, chi, c, e},
		J = D[X, {vars}] /. Thread[vars -> p + s q0];
		chi = Det[lam IdentityMatrix[n] - J];      (* see the caveat above *)
		chi = Normal@Series[chi, {s, 0, 1}];
		c = PadRight[CoefficientList[chi, lam], n + 1]; (* c[[j+1]] = coeff lam^j *)
		e[k_] := If[k < 0, 0, (-1)^k c[[n - k + 1]]];   (* e_k(J(p + s q0))       *)
		With[{e2 = e[n - 2] /. s -> 0, e3 = e[n - 3] /. s -> 0,
			Dn = D[e[n], s] /. s -> 0, Dn1 = D[e[n - 1], s] /. s -> 0},
		{-Dn/(2 e2), Dn1/e2 - e3 Dn/e2^2}]]
	\end{verbatim}
	
	The accompanying file \texttt{Supplementary\-Verification.wl} provides
	a self-contained symbolic verification of the results, including the
	nondegeneracy test of Corollary~\ref{cor:test}, the central functions
	\eqref{eq:deltatau}, the sign \eqref{eq:signab}, a direct verification
	of the generating identity \eqref{eq:gen}, and a cross-validation of
	the coefficients against \eqref{eq:abKuz}; see Section~\ref{sec:supp}.
	\section{Examples}\label{sec:examples}
	
	\subsection{The generic three-dimensional germ}
	
	Let $n=3$ and let $X$ be written in coordinates $(y_{1},y_{2},z)$ in
	which $J=DX(0)$ is already adapted:
	\begin{equation}\label{eq:ex1}
		\begin{aligned}
			\dot y_{1}&=y_{2}+\alpha_{20}y_{1}^{2}+\alpha_{11}y_{1}y_{2}
			+\alpha_{02}y_{2}^{2}+c_{1}y_{1}z+c_{2}y_{2}z+c_{3}z^{2},\\
			\dot y_{2}&=\beta_{20}y_{1}^{2}+\beta_{11}y_{1}y_{2}
			+\beta_{02}y_{2}^{2}+d_{1}y_{1}z+d_{2}y_{2}z+d_{3}z^{2},\\
			\dot z&=\kappa z+g_{20}y_{1}^{2}+g_{11}y_{1}y_{2}+g_{02}y_{2}^{2}
			+h_{1}y_{1}z+h_{2}y_{2}z+h_{3}z^{2},
		\end{aligned}
	\end{equation}
	with $\kappa\neq0$.  Here $q_{0}=(1,0,0)$, $e_{n-2}=e_{1}(J)=\kappa$ and
	$e_{n-3}=e_{0}=1$.  Evaluating $DX$ at $x=s\,q_{0}$ and keeping
	first-order terms,
	\[
	D_{q_{0}}e_{3}=-2\kappa\beta_{20},
	\qquad
	D_{q_{0}}e_{2}=\kappa\bigl(2\alpha_{20}+\beta_{11}\bigr)-2\beta_{20},
	\qquad
	D_{q_{0}}e_{1}=2\alpha_{20}+\beta_{11}+h_{1},
	\]
	so that Theorem~\ref{thm:main} gives
	\begin{equation}\label{eq:ex1ab}
		a=-\frac{1}{2}\cdot\frac{-2\kappa\beta_{20}}{\kappa}=\beta_{20},
		\qquad
		b=\frac{\kappa(2\alpha_{20}+\beta_{11})-2\beta_{20}}{\kappa}
		-\frac{1\cdot(-2\kappa\beta_{20})}{\kappa^{2}}
		=2\alpha_{20}+\beta_{11},
	\end{equation}
	in agreement with Lemma~\ref{lem:identify}.  Two structural facts are
	visible.  None of the twelve transverse coupling coefficients
	$c_{i},d_{i},g_{i},h_{i}$ occurs in \eqref{eq:ex1ab}: the entire third
	equation and every $z$-dependent term of the first two are invisible to
	$a$ and $b$, as they must be by Step~1 of Lemma~\ref{lem:identify}.  By
	contrast $h_{1}$ does occur in $D_{q_{0}}e_{1}$, in accordance with
	\eqref{eq:trace}, where $\tr\Ltr=h_{1}$.
	
	\subsection{A worked instance}
	
	Consider on $\R^{3}$
	\begin{equation}\label{eq:ex2}
		\dot x=y+x^{2}+xz,
		\qquad
		\dot y=3x^{2}-xy+yz+z^{2},
		\qquad
		\dot z=-z+x^{2}+2xy-y^{2}+xz .
	\end{equation}
	The origin is an equilibrium with
	\[
	J=\begin{pmatrix}0&1&0\\0&0&0\\0&0&-1\end{pmatrix},
	\qquad
	e_{1}=-1,\quad e_{2}=e_{3}=0,\quad \rank J=2,
	\]
	so $(\mathrm{BT}_{0})$ holds and $\ker J=\langle q_{0}\rangle$ with
	$q_{0}=(1,0,0)$.  Along $x=(s,0,0)$,
	\[
	e_{1}(s)=2s-1,\qquad
	e_{2}(s)=-7s-3s^{2},\qquad
	e_{3}(s)=6s-4s^{2}+12s^{3},
	\]
	hence $D_{q_{0}}e_{3}=6$, $D_{q_{0}}e_{2}=-7$, $D_{q_{0}}e_{1}=2$, and by
	\eqref{eq:mainbis} with $e_{1}=-1$, $e_{0}=1$,
	\[
	a=-\tfrac12\cdot\frac{6}{-1}=3,
	\qquad
	b=\frac{-7}{-1}-\frac{1\cdot6}{(-1)^{2}}=1 .
	\]
	Direct evaluation of \eqref{eq:abKuz} with $q_{1}=(0,1,0)$,
	$p_{0}=(1,0,0)$, $p_{1}=(0,1,0)$ gives $B(q_{0},q_{0})=(2,6,2)$ and
	$B(q_{0},q_{1})=(0,-1,2)$, whence $a=\tfrac12\cdot6=3$ and
	$b=2+(-1)=1$, in agreement.  Since $\Jtr=(-1)$ is hyperbolic and
	$ab=3\neq0$, the origin is a nondegenerate BT singularity, and by
	\eqref{eq:signab}
	\[
	s=-\operatorname{sign}\bigl[(-1)\cdot6\cdot((-1)(-7)-1\cdot6)\bigr]
	=-\operatorname{sign}(-6)=+1 .
	\]
	Finally $\tr\Ltr=1$ and indeed $D_{q_{0}}e_{1}=2=b+1$, illustrating
	\eqref{eq:trace}.
	
	\subsection{A non-hyperbolic transverse block}
	
	\begin{example}\label{ex:nonhyp}
		Take $n=5$ and $J=N\oplus\Jtr$ with
		$\Jtr=\operatorname{diag}\bigl(1\bigr)\oplus
		\left(\begin{smallmatrix}0&-2\\2&0\end{smallmatrix}\right)$, so that
		$\spec\Jtr=\{1,\pm2i\}$ and $\dt\Jtr=4\neq0$, but $\Jtr$ is not
		hyperbolic and there need not exist an invariant two-dimensional
		manifold tangent to $\Ec$.  For any quadratic $X$ with $DX(0)=J$,
		Theorem~\ref{thm:main} still holds and reproduces \eqref{eq:abKuz}
		exactly; only Lemma~\ref{lem:identify} is unavailable.  This is
		verified in the accompanying script.
	\end{example}
	
	\subsection{A parameter-dependent test from the literature}
	
	\begin{example}\label{ex:diasmello}
		Dias and Mello \cite{DiasMello2010} study the three-dimensional
		quadratic family
		\begin{equation}\label{eq:dm}
			\dot x=y,\qquad \dot y=z,\qquad
			\dot z=-\bigl[(a_{1}x+a_{0})z+(b_{1}x+b_{0})y+x^{2}+c_{0}\bigr],
		\end{equation}
		and prove in their Theorem~4.16 that for $b_{0}=c_{0}=0$, $a_{0}\neq0$,
		$b_{1}\neq2/a_{0}$ and $a_{1}\in\R$ arbitrary the origin is a
		nondegenerate BT point with
		\begin{equation}\label{eq:dmab}
			a=-\frac{1}{a_{0}^{2}},\qquad b=\frac{2-a_{0}b_{1}}{a_{0}^{3}},
		\end{equation}
		obtained from \eqref{eq:abKuz} with the chains
		$q_{0}=(1/a_{0},0,0)$, $q_{1}=(0,1/a_{0},0)$,
		$p_{0}=(a_{0},0,-1/a_{0})$, $p_{1}=(0,a_{0},1)$.  Theorem~\ref{thm:main}
		recovers this without any of those four vectors.  At the origin
		\[
		J=\begin{pmatrix}0&1&0\\0&0&1\\0&0&-a_{0}\end{pmatrix},
		\qquad
		\rank J=2,\quad e_{2}(J)=0,\quad e_{1}(J)=-a_{0}\neq0,
		\]
		so $(\mathrm{BT}_{0})$ holds and $\ker J=\langle(1,0,0)\rangle$.  Along
		$x=(s,0,0)$,
		\[
		e_{1}(s)=-a_{0}-a_{1}s,\qquad e_{2}(s)=b_{1}s,\qquad e_{3}(s)=-2s,
		\]
		whence $D_{q_{0}}e_{3}=-2$, $D_{q_{0}}e_{2}=b_{1}$,
		$D_{q_{0}}e_{1}=-a_{1}$, and \eqref{eq:mainbis} with $e_{1}=-a_{0}$,
		$e_{0}=1$ gives
		\[
		a=-\frac12\cdot\frac{-2}{-a_{0}}=-\frac{1}{a_{0}},
		\qquad
		b=\frac{b_{1}}{-a_{0}}-\frac{1\cdot(-2)}{a_{0}^{2}}
		=\frac{2-a_{0}b_{1}}{a_{0}^{2}} .
		\]
		These are $a_{0}$ times the published values \eqref{eq:dmab}, as
		predicted by the scaling law of Lemma~\ref{lem:welldef}(b), because
		the normalized generator $(1,0,0)$ is $a_{0}$ times the generator
		$(1/a_{0},0,0)$ used in \cite{DiasMello2010}.  The factor is $a_{0}$
		and not $a_{0}^{2}$ because the whole adapted frame moves at once:
		$q_{0}\mapsto cq_{0}$ forces $q_{1}\mapsto cq_{1}$ and hence
		$p_{0}\mapsto c^{-1}p_{0}$, $p_{1}\mapsto c^{-1}p_{1}$, so that the
		expressions \eqref{eq:abKuz}, being homogeneous of degree two in the
		$q$'s and of degree $-1$ in the $p$'s, are homogeneous of degree
		\emph{one} in $c$.  Explicitly, with $c=a_{0}$ the four vectors of
		\cite{DiasMello2010} become
		\[
		q_{0}'=(1,0,0),\quad q_{1}'=(0,1,0),\quad
		p_{0}'=\bigl(1,0,-a_{0}^{-2}\bigr),\quad
		p_{1}'=\bigl(0,1,a_{0}^{-1}\bigr),
		\]
		which again satisfy $Jq_{0}'=0$, $Jq_{1}'=q_{0}'$,
		$J^{\top}p_{1}'=0$, $J^{\top}p_{0}'=p_{1}'$ and \eqref{eq:dual}; and
		since $B(q_{0}',q_{0}')=(0,0,-2)$ and $B(q_{0}',q_{1}')=(0,0,-b_{1})$,
		\[
		\tfrac12\ip{p_{1}'}{B(q_{0}',q_{0}')}=-\frac{1}{a_{0}},
		\qquad
		\ip{p_{0}'}{B(q_{0}',q_{0}')}+\ip{p_{1}'}{B(q_{0}',q_{1}')}
		=\frac{2}{a_{0}^{2}}-\frac{b_{1}}{a_{0}}
		=\frac{2-a_{0}b_{1}}{a_{0}^{2}},
		\]
		in agreement with \eqref{eq:mainbis}.  The two computations therefore
		do not merely agree up to an unexplained constant: they agree on the
		nose once the same generator is used, and the ratio between them is
		precisely the one the covariance law predicts.  In particular the
		quantities that do not depend on the normalization, namely the
		vanishing of $a$ and of $b$ and the sign of $ab$, are literally the
		same on both sides.  The nondegeneracy conditions
		agree with theirs: $a\neq0$ for every $a_{0}\neq0$, and $b\neq0$ if and
		only if $b_{1}\neq2/a_{0}$.  Finally \eqref{eq:signab} returns
		\[
		s=-\operatorname{sign}\bigl[(-a_{0})(-2)\bigl((-a_{0})b_{1}+2\bigr)\bigr]
		=-\operatorname{sign}\bigl[2a_{0}(2-a_{0}b_{1})\bigr]
		=\operatorname{sign}\Bigl(b_{1}-\frac{2}{a_{0}}\Bigr),
		\]
		which is the value they report.  Note that $a_{1}$ occurs in
		$D_{q_{0}}e_{1}$ but in neither $a$ nor $b$, in accordance with Step~1
		of Lemma~\ref{lem:identify} and with \eqref{eq:trace}.  The whole
		computation is four principal minors and one null vector, with the four
		parameters carried symbolically throughout.
	\end{example}
	
	\section{Conclusions}\label{sec:conclusions}
	
	The two identities \eqref{eq:mainbis} say that the Bogdanov--Takens
	normal-form data of an equilibrium in $\R^{n}$ are the first-order
	variation, along the kernel of the linearization, of the two lowest
	coefficients of the characteristic polynomial, normalized by the
	determinant of the transverse block.  They are the coefficients of
	$\lambda^{0}$ and $\lambda^{1}$ of one polynomial identity,
	Theorem~\ref{thm:gen}, which describes the entire first-order spectral
	jet along $\ker J$ and shows at the same time exactly where the
	transverse block begins to interfere: by Theorem~\ref{thm:sharp} the
	invariants $e_{n-1}$ and $e_{n}$ are the only two whose first-order jet
	along $\ker J$ is free of transverse data, and the proof of that
	statement reduces to a gap lemma for polynomials, Lemma~\ref{lem:gap},
	whose sharp form may be of interest on its own.  Equivalently, by Proposition~\ref{prop:geom}, the
	planar formulas hold verbatim in every dimension once the determinant and
	the trace are replaced by the central determinant and the central trace
	\eqref{eq:deltatau}, and then the nondegeneracy conditions are the
	transversality of the kernel line to two explicit hypersurfaces of phase
	space and the topological type of the versal unfolding is the sign
	\eqref{eq:signab}.
	
	Three directions seem worth pursuing.  First, the degenerate cases
	$a=0$ or $b=0$ are governed by higher-order normal-form coefficients
	\cite{DRS1987,Kuznetsov2005}; the natural question is whether the
	second-order jet of $e_{n-1}$ and $e_{n}$ along $\ker J$ carries the cusp
	coefficient, and \eqref{eq:second} shows the shape such a formula would
	have.  Second, the combinatorial obstruction obtained in the planar
	Kolmogorov setting \cite{Companion}, where the mixed volume of the Newton
	polytopes bounds the order of the Takens normal form, has an evident
	$n$-dimensional analogue once $a$ and $b$ are expressed through
	$\dt DX$ and $e_{n-1}(DX)$, which are again Laurent polynomials on the
	torus.  Third, the elimination-theoretic reading of
	Corollary~\ref{cor:test} --- the BT locus is the saturation by $e_{n-2}$
	of the ideal generated by $X$, $e_{n-1}(DX)$, $e_{n}(DX)$ and the
	$(n-1)$-minors of $DX$ --- makes the systematic search for BT points in
	parametrized food-chain and reaction-network models a routine Gr\"obner
	computation, which the classical formulation in terms of adjoint
	eigenvectors does not.
	
	\section{Supplementary material}\label{sec:supp}
	
	A single file accompanies this paper, and it is deliberately
	self-contained: \texttt{Supplementary\-Verification.wl} is a
	\textsc{Wolfram Language} script that loads no package and requires
	nothing beyond a kernel, so that it can be read as a document and run as
	a certificate at once.  It prints a summary table of every assertion.
	
	Each assertion recomputes its reference value independently of the
	formula under test, so that the two sides of every comparison are
	obtained by different routes: the elementary invariants are recomputed
	as sums of principal minors, straight from \eqref{eq:chi}; the pair
	$(a,b)$ is cross-checked against the bilinear formula \eqref{eq:abKuz};
	the two sides of the generating identity \eqref{eq:gen} are built
	separately, the left from the jet of the invariants and the right from
	the transverse block in an adapted basis; and the planar normal form is
	recomputed by solving the homological equation of Step~2 of
	Lemma~\ref{lem:identify}.  The characteristic polynomial is formed
	explicitly as $\dt(\lambda I-A)$ rather than by the built-in function of
	the \textsc{Wolfram Language}, which returns $\dt(A-\lambda I)$ and
	would therefore reverse the sign of every $e_{k}$ for odd $n$.
	
	Its sections follow the paper and cover: the invariants against
	principal minors; the detection of $(\mathrm{BT}_{0})$ together with the
	two negative controls of Lemma~\ref{lem:split}; the generating identity
	\eqref{eq:gen} in dimensions $3\le n\le5$; Theorem~\ref{thm:main}
	against \eqref{eq:abKuz} in dimensions $2\le n\le6$, over transverse
	spectra that are real, complex, purely imaginary and $1{:}{-1}$
	resonant; the generic germ \eqref{eq:ex1} with all nineteen coefficients
	symbolic; the worked instance \eqref{eq:ex2}; the covariance of
	Lemma~\ref{lem:welldef}(b); the invariance of Lemma~\ref{lem:inv} under
	linear and nonlinear conjugation, together with the failure recorded in
	Remark~\ref{rem:necessary}; the planar specialization; Example~\ref{ex:nonhyp};
	the trace identity \eqref{eq:trace}; Proposition~\ref{prop:geom};
	Corollary~\ref{cor:sign}; the unfolding computation of
	Remark~\ref{rem:controls}; Proposition~\ref{prop:sharp}; the recurrence
	underlying Lemma~\ref{lem:gap}, the gap lemma itself and its sharpness,
	and Theorem~\ref{thm:sharp} on eight hostile spectra, among them a
	single Jordan block, the eighth roots of unity, which realize the
	extremal case of the gap lemma, and a $1{:}{-1}$ resonance; the order
	obstruction and the second-order identity \eqref{eq:second} of
	Section~\ref{sec:hopf}; and Example~\ref{ex:diasmello}, where the four
	parameters of \eqref{eq:dm} are carried symbolically and the published
	values \eqref{eq:dmab} are reproduced together with their nondegeneracy
	conditions and their sign.  All 101 assertions are reported as verified.
	
	The functions used by the script are also available, in a form meant for
	reuse rather than for reading, as a \textsc{Wolfram Language} paclet;
	that packaging is independent of this paper and is not needed to
	reproduce anything stated here.
	
	\section*{Acknowledgements}
	E. Chan-L\'opez acknowledges support from SECIHTI through the	``Estancias Posdoctorales por M\'exico'' program (CVU 422090).
	
	\section*{Ethics declarations}
	\subsection*{Conflict of interest}
	The author declares no conflict of interest.
	
	\subsection*{Data availability}
	No datasets were generated or analysed during this study.  The
	\textsc{Wolfram Language} verification script described in
	Section~\ref{sec:supp} is provided as supplementary material with this
	article; it is self-contained and requires no further software.
	

\end{document}